\documentclass[11pt]{article}
\usepackage[a4paper,margin=30mm]{geometry}
\usepackage{amsmath,amssymb,amsthm,mathtools}
\usepackage{booktabs,array}
\usepackage{graphicx}
\usepackage{enumitem}
\usepackage{hyperref}
\usepackage[nameinlink,capitalize,noabbrev]{cleveref}

\newtheorem{theorem}{Theorem}[section]
\newtheorem{proposition}[theorem]{Proposition}
\newtheorem{lemma}[theorem]{Lemma}
\newtheorem{corollary}[theorem]{Corollary}
\theoremstyle{definition}
\newtheorem{definition}[theorem]{Definition}
\newtheorem{example}[theorem]{Example}
\newtheorem{question}[theorem]{Question}
\theoremstyle{remark}
\newtheorem{remark}[theorem]{Remark}

\newcommand{\PP}{\mathbb P}
\newcommand{\RR}{\mathbb R}
\newcommand{\CC}{\mathbb C}
\newcommand{\cA}{\mathcal A}
\newcommand{\cB}{\mathcal B}
\newcommand{\cN}{\mathcal N}

\newcommand{\card}[1]{\lvert #1\rvert}

\title{Simplicial arrangements in real projective three-space revisited}
\author{Marek Janasz and Piotr Pokora}
\date{\today}

\begin{document}
\maketitle

\begin{abstract}
In this paper we study irreducible simplicial arrangements of projective planes in
$\PP^3(\RR)$ from combinatorial and projective geometry viewpoints.  We
first formulate a simpliciality criterion in terms of incidences between
rank-two and rank-three flats, together with equivalent formulations using
face numbers, reduced restrictions, and characteristic polynomials.  We
also relate the classical planar restriction data of Gr\"unbaum--Shephard
to Ziegler multirestrictions.  Our principal result concerns the rank-four
special-vertex property: among the irreducible crystallographic Coxeter
arrangements of rank four, the arrangements of types $A_4$ and $B_4$ admit
a special vertex, whereas those of types $D_4$ and $F_4$ do not.  A
simplicial deletion chain inside $B_4$ supplies further irreducible
examples with a special vertex.  Finally, we compare rank-flat, Purdy-type,
and Gr\"unbaum--Shephard defects for these arrangements.
\end{abstract}

\medskip
\noindent\textbf{2020 Mathematics Subject Classification.}
52C35 (primary); 20F55, 05B35 (secondary).

\noindent\textbf{Keywords.}
Simplicial arrangement; hyperplane arrangement; special vertex; Coxeter
arrangement; Ziegler multirestriction; intersection lattice.

\section{Introduction}
\label{sec:introduction}

Simplicial hyperplane arrangements form a natural meeting point of
combinatorics, discrete geometry, and the theory of reflection groups.  A
real projective arrangement is called simplicial if every chamber of its
complement is a simplex.  Reflection arrangements provide the most
prominent examples, but already in projective dimension three the
combinatorics of simpliciality becomes considerably richer than in the case
of line arrangements in the projective plane.  For an arrangement of
planes in $\PP^3(\RR)$ it is not enough to record the multiplicities of
intersection points and lines separately: the incidences between rank-two
and rank-three flats enter essentially.

The classical theory of simplicial arrangements in projective three-space
was developed in particular by Gr\"unbaum--Shephard
\cite{GrunbaumShephard}, and also in the PhD thesis of Hunt \cite{Hunt}, but from an algebraic geometry perspective.
Their work contains a large
amount of geometric and combinatorial information, including catalogue
data, planar sections, and numerical relations between the various strata
of an arrangement.  At the same time, the modern theory of hyperplane
arrangements provides a different set of tools, involving intersection
lattices, characteristic polynomials, freeness, and restrictions of
multiarrangements; see, for instance, \cite{DimcaBook,OrlikTerao}.  One
purpose of the present paper is to place the classical rank-four incidence
data into this modern framework and to make explicit the relations that will be
used in our study of special vertices.

Let $\cA$ be an essential arrangement of $n$ projective planes in
$\PP^3(\RR)$.  We denote by $t_p$ the number of vertices contained in
exactly $p$ planes and by $t_{pq}$ the number of incidences $(x,L)$ for
which $x$ is a $p$-fold vertex and $L$ is a $q$-fold intersection line
through $x$.  An important consequence of the face-counting relations is
the following characterization:
\[
 n+\sum_{p\geq 3}(2-p)t_p+
 \sum_{q\geq 3}(q-2)\sum_{p\geq 3}t_{pq}=0
\]
if and only if $\cA$ is simplicial -- see
Theorem~\ref{thm:simpliciality-RP3}.  This is a rank-four analogue of the
familiar numerical criterion for simplicial line arrangements in
$\PP^2(\RR)$ proved by Melchior \cite{Melchior}.  We also formulate the same condition using reduced
restrictions and characteristic polynomials.  Moreover, the planar
restrictions displayed in the classical Gr\"unbaum--Shephard catalogue
admit a natural interpretation as the supports of Ziegler
multirestrictions: their labels retain multiplicity information that is
lost after passing to the reduced restriction.

Our main combinatorial motivation comes from the notion of an arrangement with a special
vertex, recently introduced by Cuntz \cite{Cuntz}.  In the
finite rank-four setting used here, a special-vertex structure is encoded
by a pair $(P,H_\infty)$ with $H_\infty\in\cA$ and $P\notin H_\infty$
such that, for every $H\in\cA\setminus\{H_\infty\}$, there exists a
hyperplane $H_P\in\cA$ containing $P$ with
\[
H\cap H_\infty=H_P\cap H_\infty.
\]
Thus the condition imposes a strong compatibility between the projective
incidence structure and an affine parallelism obtained by deleting
$H_\infty$.

The principal result determines this property for the irreducible
crystallographic Coxeter arrangements of rank four.

\begin{theorem}\label{thm:main-special-vertex}
Among the irreducible crystallographic Coxeter arrangements of rank four,
the arrangements of types $A_4$ and $B_4$ admit a special vertex, whereas
the arrangements of types $D_4$ and $F_4$ do not.  Moreover, the simplicial
deletion chain
\[
 \cA^3_1(13)\subset
 \cA^3_1(14)\subset
 \cA^3_1(15)\subset
 \cA^3_1(16)=\cA(B_4)
\]
consists entirely of irreducible simplicial arrangements admitting a
special vertex.
\end{theorem}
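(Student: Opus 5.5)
The plan is to work with explicit normal vectors in $\RR^4$, so that the special-vertex condition becomes linear algebra. A plane $H=\ker\alpha$ meets $H_\infty=\ker\beta$ in the line $\ker\alpha\cap\ker\beta$. Hence $H\cap H_\infty=H_P\cap H_\infty$ holds exactly when the normal $\alpha_P$ of $H_P$ lies in $\operatorname{span}(\alpha,\beta)$. The condition on $(P,H_\infty)$ therefore says the following: for every root $\alpha\neq\pm\beta$, the pencil $\operatorname{span}(\alpha,\beta)$ contains a root $\gamma$ with $\gamma(P)=0$, and $\gamma\neq\pm\beta$ automatically since $\beta(P)\neq 0$. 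In particular, every $\alpha$ with $\alpha(P)=0$ is its own witness. So only roots not vanishing at $P$ need to be checked, and for each such $\alpha$ we need the rank-two flat $H\cap H_\infty$ to contain a plane through $P$.

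For the positive cases I will exhibit explicit witnesses. For $A_4$, I realize $\cA(A_4)$ in $\RR^5/\RR(1,\dots,1)$ with roots $e_i-e_j$. I take $\beta=e_1-e_2$ and $P=[0:0:1:1:1]$ up to the quotient, so that $\beta(P)=1-1\neq0$ after a suitable choice. More concretely, I take $P$ with coordinates $(a,b,c,c,c)$ and $a\neq b$. Then every $e_i-e_j$ with $i,j\ge3$ vanishes at $P$. For $\alpha=e_1-e_j$ with $j\ge3$, the pencil $\{e_1-e_j,e_1-e_2\}$ contains $e_2-e_j$, so I need $b=c$. For $\alpha=e_2-e_j$ the witness is the same root. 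So the choice $P=(a,c,c,c,c)$ with $a\neq c$ works, i.e.\ $P$ is the vertex $H_{23}\cap H_{24}\cap H_{25}$. For $B_4$ with roots $\pm e_i$, $\pm e_i\pm e_j$, I try $\beta=e_1$ and $P=(1,0,0,0)$. Then $\beta(P)\neq 0$ and every root not involving $e_1$ vanishes at $P$. For $\alpha=e_1\pm e_j$, the pencil with $e_1$ contains $e_j$, which vanishes at $P$. This works. For the deletion chain $\cA^3_1(13)\subset\cdots\subset\cA(B_4)$, I will identify which planes are deleted; presumably these are planes such as $e_2\pm e_3$-type or $e_j$-type roots. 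I will choose $(P,H_\infty)$, possibly different from $(e_1^\perp$-vertex$, e_1)$, so that each remaining root still finds its witness. The key point is that deleting a witness $e_j$ forces the pairs $e_1\pm e_j$ to be deleted too, or else a different pair must be used. Simpliciality and irreducibility of these subarrangements I would cite from the earlier criterion (Theorem~\ref{thm:simpliciality-RP3}), by computing the $t_p,t_{pq}$ counts, and from the connectedness of the root-orthogonality graph.

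For the negative cases, $D_4$ (12 planes) and $F_4$ (24 planes), the idea is to use the symmetry group to reduce the number of candidates for $H_\infty$ to one Weyl-orbit representative. All roots of $D_4$ form one orbit. For $F_4$ there are two orbits, long and short. Given $\beta$, every $\alpha$ not vanishing at $P$ needs a root $\gamma\in\operatorname{span}(\alpha,\beta)$ with $\gamma(P)=0$. The rank-two pencils through $\beta$ are type $A_1\times A_1$ (only $\alpha,\beta$, so we need $\alpha(P)=0$), $A_2$, or $B_2$. This shows that $P$ must lie on every plane orthogonal to $\beta$. For $D_4$ with $\beta=e_1+e_2$, the orthogonal roots are $e_1-e_2$ and $e_3\pm e_4$, which forces $P=(1,-1,0,0)$ and then $\beta(P)=0$, a contradiction. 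For $F_4$, I do the same for $\beta$ long and $\beta$ short. The orthogonal roots span a rank-three subspace, forcing $P=\beta$ up to scale, again contradicting $\beta(P)\neq0$. I expect the main obstacle to be the deletion chain: pinning down exactly which planes the paper's $\cA^3_1(k)$ removes, and checking that one special-vertex pair survives each deletion.
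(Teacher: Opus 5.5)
Your certificates for $A_4$ and $B_4$ are correct and agree with the paper's normal forms up to relabelling. The negative part, however, rests on a false contradiction.

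For $\beta=e_1+e_2$ in $D_4$, requiring $e_1-e_2$ and $e_3\pm e_4$ to vanish at $P$ gives $P_1=P_2$ and $P_3=P_4=0$. So $P=[1:1:0:0]$, not $[1:-1:0:0]$, and $\beta(P)=2\neq0$.

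More generally, in $D_4$ and in both orbits of $F_4$ the $A_1\times A_1$-partners of $\beta$ span $\beta^\perp$. They therefore force $P=\beta^\sharp$, the point corresponding to $\beta$ under the standard inner product. But $\beta(\beta^\sharp)=\lvert\beta\rvert^2\neq0$, which contradicts nothing. Your own $B_4$ certificate is exactly $\beta=e_1$, $P=e_1^\sharp$, so your argument would exclude $B_4$ too.

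The missing step is this. Once $P=\beta^\sharp$ is forced, a root $\alpha$ is covered if and only if $\operatorname{span}(\alpha,\beta)$ contains a root orthogonal to $\beta$. This fails exactly for $A_2$-pencils, so you must exhibit one:
\begin{itemize}
\item For $\beta=e_1+e_2$ (in $D_4$, and the long orbit of $F_4$), take $\alpha=e_1+e_3$. Its pencil $\{e_1+e_2,e_1+e_3,e_2-e_3\}$ takes the values $2,1,1$ at $(1,1,0,0)$.
\item For $\beta=e_1$ in $F_4$, take $\alpha=(1,1,1,1)$. Its pencil $\{e_1,(1,1,1,1),(1,-1,-1,-1)\}$ takes the values $1,1,1$ at $(1,0,0,0)$.
\end{itemize}

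Your claim that $P$ lies on every plane orthogonal to $\beta$ is also false in $F_4$. For instance $e_2\perp e_1$, but $\operatorname{span}(e_1,e_2)$ is a $B_2$-pencil containing $e_1\pm e_2$. You must use only the genuine $A_1\times A_1$-partners and check that they span $\beta^\perp$, which they do. These are $e_i\pm e_j$ with $2\le i<j\le4$ for $\beta=e_1$, and $e_3,e_4,(1,-1,\pm1,\pm1)$ for $\beta=e_1+e_2$.

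Repaired this way, your argument is a conceptual, computer-free alternative to the paper's route. The paper instead enumerates all planes spanned by triples of normal points (Corollary~\ref{cor:finite-test}) and computes covered sets in exact arithmetic. Your argument also explains the paper's data: the $D_4$ plane $\{a_1+a_2=0\}$ is $\Pi_{\beta^\sharp}$, and it covers only the three $A_1\times A_1$-partners.

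The deletion chain is not proved: you give no certificates, although you correctly spotted the obstruction. Deleting $\ker e_j$ orphans $e_1\pm e_j$ for the pair $(\ker e_1,e_1^\sharp)$, and deleting $\ker e_1$ removes $H_\infty$ itself.

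The missing idea is to move $H_\infty$ to the long root $e_k+e_{k+1}$, where $\ker e_k$ is the last deleted hyperplane, and to take $P=e_k^\sharp$. Then $e_k-e_{k+1}$ has witness $e_{k+1}$, and each $e_k\pm e_j$ with $j\neq k,k+1$ has witness $e_{k+1}\mp e_j$ in an $A_2$-pencil. All of these vanish at $P$ and none was deleted. These are exactly the paper's pairs $(e_1+e_2,e_1^\sharp)$, $(e_2+e_3,e_2^\sharp)$ and $(e_3+e_4,e_3^\sharp)$ in Proposition~\ref{prop:B4-chain}.

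For simpliciality and irreducibility the paper simply cites the catalogue \cite{Geis}. Theorem~\ref{thm:simpliciality-RP3} would also work, but the counts must actually be carried out. Connectedness of the non-orthogonality graph is not a valid irreducibility test for normal sets that are not root systems. For example, $(1,0)$ and $(1,1)$ are non-orthogonal, yet the corresponding two lines in $\RR^2$ form a reducible arrangement.
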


The positive part of Theorem~\ref{thm:main-special-vertex} follows from
explicit projective normal forms.  For the negative part we use a dual
formulation, namely if
\[
 \cN(\cA)=\{[a_H]\mid H\in\cA\}\subset\PP(V^*)
\]
is the projective configuration of normal covectors, then the existence of
a special-vertex pair is equivalent to the existence of a distinguished
point $q_\infty\in\cN(\cA)$ and a plane $\Pi\subset\PP(V^*)$, with
$q_\infty\notin\Pi$, satisfying a simple collinearity condition with all
remaining points of $\cN(\cA)$.  This converts the non-existence problem
into a finite exact incidence computation.  Applied to the projectivized
root configurations of types $D_4$ and $F_4$, it excludes every possible
special-vertex pair.

A second theme concerns numerical invariants of the intersection lattice.
If $\ell(\cA)$ and $p(\cA)$ denote respectively the numbers of rank-two
and rank-three projective flats, we consider the rank-flat difference
\[
 G(\cA)=p(\cA)-\ell(\cA)
\]
and the Purdy defect
\[
 \Delta(\cA)=p(\cA)-\ell(\cA)+\card{\cA}+2.
\]
The latter is motivated by the Purdy-type incidence problems studied in
\cite{Purdy1981, Purdy1986}.  The corresponding inequality fails for complex projective
hyperplane arrangements \cite{MMPP}; it is therefore natural to ask
whether the additional real, irreducible, and simplicial hypotheses force
non-negativity.  For the arrangements studied here all Purdy defects are
non-negative.  The deletion chain inside $B_4$ is particularly
transparent: for $13\leq n\leq16$ one has
\[
 \Delta\bigl(\cA^3_1(n)\bigr)=16-n,
\]
so that the full $B_4$ arrangement is the equality case within this
family.  We finally compare these data with the Gr\"unbaum--Shephard
defect, which instead measures the proportion of ordinary intersection
lines.

The paper is organized as follows.  Section \ref{sec:preliminaries}
fixes notation and recalls the basic invariants.  Section
\ref{sec:simplicial-RP3} gives the simpliciality criteria.  Section
\ref{sec:ziegler-restrictions} discusses Ziegler multirestrictions, while
Section \ref{sec:special} introduces the special-vertex condition.
Sections \ref{sec:positive} and \ref{sec:negative} prove
Theorem~\ref{thm:main-special-vertex}.  Section \ref{sec:defects}
studies the three numerical defects and concludes with open questions.

\section{Simplicial arrangements in projective three-space}
\label{sec:preliminaries}
\subsection{Basics}

Let $V$ be a real vector space of dimension four.  A projective hyperplane
arrangement in $\PP(V)\cong\PP^3(\RR)$ is a finite set
\[
 \cA=\{H_1,\dots,H_n\}
\]
of distinct projective hyperplanes.  Choosing a nonzero covector
$a_H\in V^*$ with $H=\PP(\ker a_H)$ for each $H\in\cA$, one obtains the
defining polynomial
\[
 Q_{\cA}=\prod_{H\in\cA}a_H,
\]
well-defined up to a nonzero scalar.

The arrangement is \emph{essential} if the covectors $a_H$ span $V^*$.
Equivalently, the corresponding central arrangement in $V$ has rank four.
It is \emph{reducible} if, after a linear change of coordinates, its central
cone decomposes as a product of two nonempty lower-rank arrangements;
otherwise it is \emph{irreducible}.

The complement
\[
 \PP^3(\RR)\setminus\bigcup_{H\in\cA}H
\]
is a disjoint union of open chambers and their closures are convex projective
polyhedra after passing to a suitable affine chart.

\begin{definition}
An essential arrangement $\cA$ in $\PP^3(\RR)$ is \emph{simplicial} if the
closure of every chamber is combinatorially a tetrahedron.  Equivalently,
every chamber has exactly four facets.
\end{definition}

Simpliciality can be expressed combinatorially in terms of the face data of
the induced cell decomposition; see \cite{CuntzGeis}.  For the present
paper, we use only the geometric definition and the known simpliciality of
the finite real reflection arrangements.

We denote by $L_2(\cA)$ the set of rank-two projective flats
(intersection lines) and by $L_3(\cA)$ the set of rank-three projective
flats (intersection points, or vertices).  We write
\[
 \ell(\cA)=\card{L_2(\cA)},
 \qquad
 p(\cA)=\card{L_3(\cA)}.
\]
The multiplicity and incidence refinements of these numbers are introduced
in Section~\ref{sec:simplicial-RP3}, and the corresponding numerical
defects are considered in Section~\ref{sec:defects}.

\subsection{Freeness and characteristic polynomials}
\label{subsec:homological}

Although the arrangements considered in this paper are projective, the
standard homological invariants are attached to their central cones.  Let
$\widehat{\cA}$ be the central arrangement in $V\cong\RR^4$ corresponding
to $\cA$, and we put
\[
 S=\RR[t,x,y,z].
\]
After choosing defining linear forms $a_H\in V^*$, its defining polynomial
is the same homogeneous polynomial
\[
 Q_{\cA}=\prod_{H\in\cA}a_H,
 \qquad \deg Q_{\cA}=n=\card{\cA}.
\]
We grade derivations by the polynomial degree of their coefficient forms.
The module of logarithmic derivations is
\[
 D(\widehat{\cA})
 =
 \bigl\{\theta\in\operatorname{Der}_{\RR}(S)
 \mid \theta(Q_{\cA})\in Q_{\cA} S\}.
\]
We call the projective arrangement $\cA$ \emph{free} if
$D(\widehat{\cA})$ is a free graded $S$-module.  If
\[
 D(\widehat{\cA})
 \cong
 \bigoplus_{i=1}^{4} S(-d_i),
\]
then the integers $(d_1,d_2,d_3,d_4)$ are the \emph{exponents} of
$\cA$, denoted $\exp(\cA)$.  Since $\widehat{\cA}$ is central, the Euler
derivation
\[
 \theta_E
 =
 t\frac{\partial}{\partial t}
 +x\frac{\partial}{\partial x}
 +y\frac{\partial}{\partial y}
 +z\frac{\partial}{\partial z}
\]
belongs to $D(\widehat{\cA})$.  For an essential free rank-four
arrangement one exponent is therefore equal to $1$, and we write
\[
 \exp(\cA)=(1,d_2,d_3,d_4).
\]
Saito's criterion \cite[Theorem~1.8(ii)]{KS1} gives, in particular,
\[
 1+d_2+d_3+d_4=n.
\]
We shall also use the characteristic polynomial.  Let
$L(\widehat{\cA})$ be the intersection lattice of the central arrangement,
ordered by reverse inclusion, and let $\mu$ be its M\"obius function.  Then
\[
 \chi_{\widehat{\cA}}(u)
 =
 \sum_{X\in L(\widehat{\cA})}\mu(X)u^{\dim X}.
\]
It depends only on the intersection lattice.  Because $\widehat{\cA}$ is a
nonempty central arrangement, $u-1$ divides
$\chi_{\widehat{\cA}}(u)$, and we write
\[
 \overline\chi_{\cA}(u)
 =
 \frac{\chi_{\widehat{\cA}}(u)}{u-1}
\]
for the reduced characteristic polynomial.

The fundamental link with freeness is Terao's Factorization Theorem \cite{terao}:
if $\cA$ is free with exponents $(d_1,d_2,d_3,d_4)$, then
\[
 \chi_{\widehat{\cA}}(u)
 =
 \prod_{i=1}^{4}(u-d_i).
\]
Hence, in the essential rank-four case,
\[
 \chi_{\widehat{\cA}}(u)
 =(u-1)(u-d_2)(u-d_3)(u-d_4),
 \qquad
 \overline\chi_{\cA}(u)
 =(u-d_2)(u-d_3)(u-d_4).
\]
In particular, freeness determines a complete linear factorization of the
characteristic polynomial over the rationals, and the exponents determine
its roots.  The converse is false in general: integral factorization of
$\chi_{\widehat{\cA}}(u)$ is a necessary condition for freeness, but by
itself it does not imply that $D(\widehat{\cA})$ is free.  This distinction
will be important whenever characteristic-polynomial data are used as a
combinatorial test rather than as a freeness criterion. For concise accounts of these facts, we refer to two fundamental monographs \cite{DimcaBook,OrlikTerao}.
\subsection{Coxeter arrangements in the classical catalogue}
\label{subsec:coxeter-catalogue}

From a modern point of view, five entries in the Gr\"unbaum--Shephard catalogue are precisely the
projectivizations of the irreducible finite real reflection arrangements of
rank four.  In the notation used in the catalogue one has
\[
\cA^3_1(10)=\cA(A_4),\qquad
\cA^3_1(12)=\cA(D_4),\qquad
\cA^3_1(16)=\cA(B_4),
\]
\[
\cA^3_1(24)=\cA(F_4),\qquad
\cA^3_1(60)=\cA(H_4).
\]
The noncrystallographic type $H_4$ is included here for context; the main
special-vertex theorem concerns the four crystallographic rank-four types.
Their basic numerical data are summarized in
Table~\ref{tab:coxeter-catalogue}.
\begin{table}[htbp]
\centering
\renewcommand{\arraystretch}{1.15}
\begin{tabular}{c|c|r|r|c}
\toprule
catalogue notation & Coxeter type & $n$  & $2f_3$ & exponents\\
\midrule
$\cA^3_1(10)$ & $A_4$ & 10 &  120 & $(1,2,3,4)$\\
$\cA^3_1(12)$ & $D_4$ & 12 & 192 & $(1,3,3,5)$\\
$\cA^3_1(16)$ & $B_4$ & 16 &  384 & $(1,3,5,7)$\\
$\cA^3_1(24)$ & $F_4$ & 24 &  1152 & $(1,5,7,11)$\\
$\cA^3_1(60)$ & $H_4$ & 60 & 14400 & $(1,11,19,29)$\\
\bottomrule
\end{tabular}
\caption{The irreducible rank-four Coxeter arrangements occurring in the
Gr\"unbaum--Shephard catalogue. Here $f_3$ denotes the number of projective
chambers.}
\label{tab:coxeter-catalogue}
\end{table}

Indeed, if $W$ is the corresponding finite real reflection group, then its
central reflection arrangement in $\RR^4$ has exactly $|W|$ chambers.
Projectivization identifies every chamber with its antipodal chamber and hence
\[
2f_3=|W|.
\]
The values in the fourth column of Table~\ref{tab:coxeter-catalogue} are
therefore
\[
120,\ 192,\ 384,\ 1152,\ 14400,
\]
which are the orders of the Coxeter groups of types
$A_4,D_4,B_4,F_4,H_4$, respectively.

All five arrangements are free; their exponents are the Coxeter
exponents displayed in the last column of
Table~\ref{tab:coxeter-catalogue}; see, for example, \cite{OrlikTerao}. By Terao's factorization theorem, the last column also determines their characteristic polynomials.  For later use, we record those of the $D_4$ and
$F_4$ arrangements:
\[
 \chi_{\widehat{\cA(D_4)}}(u)=(u-1)(u-3)^2(u-5),
 \qquad
 \chi_{\widehat{\cA(F_4)}}(u)=(u-1)(u-5)(u-7)(u-11).
\]

\section{A simpliciality criterion for arrangements in \texorpdfstring{$\PP^3(\RR)$}{P3(R)}}
\label{sec:simplicial-RP3}
The situation in $\PP^3(\RR)$ is slightly more involved compared with the case of simplicial line arrangements in the real projective plane. In the
two-dimensional case, simpliciality can be expressed only in terms of
the numbers of intersection points of the various multiplicities. In
dimension three, one must also take into account the incidence relation
between intersection lines and intersection points, and this will be visible soon. 

Let
\[
\cA=\{H_1,\ldots,H_n\}
\]
be an essential arrangement of real projective planes in $\PP^3(\RR)$.
We denote by
\[
(f_0,f_1,f_2,f_3)
\]
the $f$-vector of the cell decomposition induced by $\cA$. Thus $f_i$ is
the number of $i$-dimensional cells and $f_3$ is the number of chambers.
We retain the definition of simpliciality given in the preceding section. We start with the following, probably folkloric, numerical characterization of the simpliciality.

\begin{proposition}
\label{prop:face-simpliciality-RP3}
Let $\cA$ be an essential arrangement of projective planes in
$\PP^3(\RR)$. Then the following conditions are equivalent:
\begin{enumerate}
    \item $\cA$ is simplicial;
    \item $f_2=2f_3$;
    \item $2f_0-2f_1+f_2=0$.
\end{enumerate}
\end{proposition}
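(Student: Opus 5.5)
Throughout, I use two standard facts about the cell decomposition of $\PP^3(\RR)$ induced by an essential arrangement: every cell is an open convex polyhedron (after choosing an affine chart containing its closure), and every facet (2-cell) lies in exactly one plane of $\cA$, so it separates exactly two distinct chambers. The plan has three steps: a double count of facet–chamber incidences, an Euler characteristic computation, and a combination of the two.

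\emph{Step 1 (double counting).} Count the pairs $(F,C)$ where $F$ is a 2-cell and $C$ is a chamber having $F$ as a facet. Each 2-cell lies on the boundary of exactly two chambers, so the number of pairs is $2f_2$. On the other hand, each chamber is a convex 3-polytope with at least four facets. Hence
\[
2f_2=\sum_{C}\#\{\text{facets of } C\}\geq 4f_3,
\]
with equality if and only if every chamber has exactly four facets. A 3-polytope with four facets is a tetrahedron. This gives $(1)\Leftrightarrow(2)$.

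The point that needs care is that each facet really separates two \emph{distinct} chambers and is counted once for each. Near an interior point of a 2-cell, only one plane of $\cA$ passes, and locally the two sides belong to different chambers. Since $\cA$ is essential, no chamber meets itself across a facet: the closures are genuine convex polytopes in an affine chart.

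\emph{Step 2 (Euler characteristic).} The cells give a regular CW decomposition of $\PP^3(\RR)$, and $\chi(\PP^3(\RR))=0$. Therefore
\[
f_0-f_1+f_2-f_3=0.
\]
Substituting $f_3=f_0-f_1+f_2$ into the relation $f_2=2f_3$ gives $f_2=2f_0-2f_1+2f_2$, that is, $2f_0-2f_1+f_2=0$. Each substitution is reversible, so $(2)\Leftrightarrow(3)$.

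I expect the main obstacle to be the justification that the decomposition is a regular CW complex whose cells are convex polyhedra, so that the Euler relation and the "at least four facets" bound apply. I would handle this by lifting to the central arrangement $\widehat{\cA}$ in $\RR^4$ and intersecting with the unit sphere $S^3$. Essentiality makes every cone pointed, so the induced decomposition of $S^3$ is a regular cell complex with convex spherical-polytope cells. The antipodal map acts freely on the cells, which halves every $f_i$, and $\chi(S^3)=0$ descends to the relation above. A pointed 4-dimensional cone has at least four facets, which gives the bound in Step 1.
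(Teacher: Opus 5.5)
Your proof is correct and follows the paper's argument. The double count $2f_2=\sum_C w(C)\geq 4f_3$ gives (1)$\Leftrightarrow$(2), and the Euler relation $f_0-f_1+f_2-f_3=0$ coming from $\chi(\PP^3(\RR))=0$ gives (2)$\Leftrightarrow$(3). Your lift to $S^3$, using pointed chamber cones and the free antipodal action, supplies details the paper leaves implicit, for instance why the chambers on the two sides of a 2-cell are distinct.
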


\begin{proof}
For a chamber $C$, let $w(C)$ denote the number of its two-dimensional
walls. Since $\cA$ is essential, every chamber has at least four
walls. Moreover, every two-dimensional face is incident with exactly
two chambers. Consequently,
\[
2f_2=\sum_C w(C)\geq 4f_3.
\]
Equality holds if and only if every chamber has exactly four walls,
which is precisely the simpliciality condition for $\cA$. This proves the
equivalence of the first two conditions.

Next, since
\[
\chi(\PP^3(\RR))=0,
\]
The classical algebraic topology Euler's relation for the induced cell decomposition reads as
\[
f_0-f_1+f_2-f_3=0.
\]
If $f_2=2f_3$, then
\[
f_0-f_1+\frac{f_2}{2}=0,
\]
which is equivalent to
\[
2f_0-2f_1+f_2=0.
\]
The converse follows in the same way.
\end{proof}
For every $H\in\cA$, we consider the reduced restriction
\[
\cA^H
=
\{H\cap H'\mid H'\in\cA\setminus\{H\}\}_{\mathrm{red}}.
\]
It is an arrangement of projective lines in
\[
H\simeq \PP^2(\RR).
\]
We write $r(\cB)$ for the number of chambers of a real
projective arrangement $\cB$.

\begin{proposition}
\label{prop:restriction-criterion}
For an essential arrangement $\cA$ of projective planes in
$\PP^3(\RR)$, one has
\[
f_2(\cA)
=
\sum_{H\in\cA}r(\cA^H).
\]
Consequently,
\[
\cA \text{ is simplicial}
\quad\Longleftrightarrow\quad
\sum_{H\in\cA}r(\cA^H)
=
2r(\cA).
\]
\end{proposition}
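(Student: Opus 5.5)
The plan is to count two-dimensional cells plane by plane and then feed the count into Proposition~\ref{prop:face-simpliciality-RP3}. The first step is to show that every $2$-cell of the decomposition induced by $\cA$ lies in exactly one plane $H\in\cA$. A $2$-cell is a connected component of the set of points lying on exactly one hyperplane, restricted to a given $H$. More precisely, since the $2$-cells of the stratification are the relatively open faces of codimension one, each such face has affine span inside a unique $H$. Two distinct planes meet only in a line, and a line is contained in the $1$-skeleton, so the unique plane is forced. Hence
\[
f_2(\cA)=\sum_{H\in\cA}\#\{\text{$2$-cells contained in }H\}.
\]

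The second step identifies, for a fixed $H$, the $2$-cells inside $H$ with the chambers of $\cA^H$. The $2$-cells in $H$ are the connected components of
\[
H\setminus\bigcup_{H'\neq H}(H\cap H').
\]
Each $H\cap H'$ is a projective line in $H\simeq\PP^2(\RR)$, because distinct planes are never contained in one another. Passing to the reduced restriction only removes repeated lines, so the union above equals the union of the lines of $\cA^H$. The components are therefore exactly the chambers of $\cA^H$, and their number is $r(\cA^H)$.

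One point needs checking here: that $\cA^H$ is nonempty and that its chambers are genuinely $2$-cells. Essentiality of $\cA$ gives at least four planes, so $\cA^H$ is nonempty. It is not even required that $\cA^H$ be essential in $H$, since $r$ simply counts connected components. Summing over $H$ gives the formula $f_2(\cA)=\sum_H r(\cA^H)$.

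The consequence is then immediate. By definition $r(\cA)=f_3$, so Proposition~\ref{prop:face-simpliciality-RP3}, namely (1)$\Leftrightarrow$(2), gives
\[
\cA\text{ simplicial}\iff f_2=2f_3\iff \sum_{H\in\cA} r(\cA^H)=2r(\cA).
\]

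The only delicate point is the first step: making sure the cell decomposition used for the $f$-vector really has as its $2$-cells the components of $H$ minus the other planes, rather than some finer subdivision. I would settle this by taking the standard face stratification of the arrangement, whose faces are the sign-vector classes. In that stratification a point of $H$ lies in a $2$-face exactly when it lies on no other plane, and two such points lie in the same face exactly when they lie in the same component.
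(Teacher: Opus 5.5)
Your proof is correct and follows the same route as the paper. Each $2$-face lies in a unique plane $H$, the $2$-faces inside $H$ are exactly the chambers of $\cA^H$, and the equivalence then follows from $f_3=r(\cA)$ together with (1)$\Leftrightarrow$(2) of Proposition~\ref{prop:face-simpliciality-RP3}. Your added remarks on the face stratification only make explicit what the paper takes for granted.
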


\begin{proof}
Every two-dimensional face of $\cA$ is contained in a unique
plane $H\in\cA$. The two-dimensional faces contained in $H$ are
precisely the chambers of the induced line arrangement $\cA^H$.
Therefore,
\[
f_2(\cA)
=
\sum_{H\in\cA}r(\cA^H).
\]
Since $f_3(\cA)=r(\cA)$, the second assertion follows
from Proposition~\ref{prop:face-simpliciality-RP3}.
\end{proof}

This formulation admits a characteristic-polynomial interpretation.
Let $\widehat{\cA}$ be the central arrangement in $\RR^4$
associated with $\cA$. Similarly, let
$\widehat{\cA^H}$ denote the central arrangement in $\RR^3$
associated with the projective line arrangement $\cA^H$.
Zaslavsky's Theorem \cite[Theorem A]{zaslavsky} gives
\[
r(\cA)
=
\frac{1}{2}\chi_{\widehat{\cA}}(-1)
\]
and
\[
r(\cA^H)
=
-\frac{1}{2}\chi_{\widehat{\cA^H}}(-1).
\]
We therefore obtain the following consequence.

\begin{corollary}[{\cite[Corollary 2.4]{CuntzGeis}}]
\label{cor:characteristic-polynomial-criterion}
The arrangement $\cA$ is simplicial if and only if
\[
-\sum_{H\in\cA}
\chi_{\widehat{\cA^H}}(-1)
=
2\chi_{\widehat{\cA}}(-1).
\]
In particular, simpliciality is determined by the intersection lattice
of $\cA$.
\end{corollary}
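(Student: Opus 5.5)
The plan is to substitute Zaslavsky's two formulas, recorded just before the statement, into the criterion of Proposition~\ref{prop:restriction-criterion}. That proposition says $\cA$ is simplicial if and only if $\sum_{H\in\cA} r(\cA^H)=2r(\cA)$. For each $H\in\cA$ the reduced restriction $\cA^H$ is a real projective line arrangement in $H\simeq\PP^2(\RR)$, with central cone $\widehat{\cA^H}$ of rank three in $\RR^3$. Zaslavsky gives $r(\cA^H)=-\tfrac12\chi_{\widehat{\cA^H}}(-1)$. Likewise $r(\cA)=f_3(\cA)=\tfrac12\chi_{\widehat{\cA}}(-1)$.

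The steps are as follows.

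First, I will justify the signs. A central arrangement in $\RR^\ell$ has $(-1)^\ell\chi(-1)$ regions. Each projective chamber corresponds to exactly two antipodal central chambers, so the number of projective chambers is $\tfrac12(-1)^\ell\chi(-1)$. With $\ell=4$ for $\widehat{\cA}$ and $\ell=3$ for $\widehat{\cA^H}$, this yields exactly the two displayed formulas.

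Second, I will substitute. The equality $\sum_H r(\cA^H)=2r(\cA)$ becomes $-\tfrac12\sum_H\chi_{\widehat{\cA^H}}(-1)=\chi_{\widehat{\cA}}(-1)$. Multiplying by $2$ gives the stated identity. Since every step is an equivalence, the ``if and only if'' follows.

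Third, I will prove the final assertion, that simpliciality depends only on the intersection lattice. $\chi_{\widehat{\cA}}$ depends only on $L(\widehat{\cA})$ through the M\"obius function. For each $H$, the lattice $L(\widehat{\cA^H})$ is the interval of $L(\widehat{\cA})$ consisting of the flats contained in $H$. Passing to the reduced restriction $\cA^H$ identifies coinciding lines $H\cap H'$, and these are exactly the rank-two flats below $H$. So $\chi_{\widehat{\cA^H}}$ is also lattice-determined, and the sum over $H$ ranges over the atoms of the lattice.

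The only point requiring care is this identification of the restriction lattice with an interval of $L(\widehat{\cA})$, including the reduction step. Everything else is a direct substitution.
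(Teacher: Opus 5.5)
Your proposal is correct and follows the paper's route: substitute Zaslavsky's formulas $r(\cA)=\tfrac12\chi_{\widehat{\cA}}(-1)$ and $r(\cA^H)=-\tfrac12\chi_{\widehat{\cA^H}}(-1)$ into the criterion $\sum_{H}r(\cA^H)=2r(\cA)$ of Proposition~\ref{prop:restriction-criterion}. Your additional justifications, namely the sign $(-1)^\ell$ with antipodal pairing of central chambers and the identification of $L(\widehat{\cA^H})$ with the interval of flats of $L(\widehat{\cA})$ contained in $H$, supply details the paper leaves implicit, especially for the lattice-dependence claim. One small point: under the paper's reverse-inclusion order that interval lies above $H$, not below it.
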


\begin{remark}
No freeness hypothesis is involved in Corollary~\ref{cor:characteristic-polynomial-criterion}:
the characteristic polynomial is being used here through Zaslavsky's region
formula, not through Terao's factorization theorem.  Thus this combinatorial
simpliciality criterion applies equally to free and non-free arrangements, and this is crucial.
\end{remark}

We now refine the rank-two and rank-three flat counts by multiplicity and incidence. For
$L\in L_2(\cA)$ and $x\in L_3(\cA)$, put
\[
m(L)=\bigl|\{H\in\cA\mid L\subset H\}\bigr|,
\qquad
\nu(x)=\bigl|\{H\in\cA\mid x\in H\}\bigr|.
\]

Thus $m(L)$ is the multiplicity of an intersection line and $\nu(x)$ is the
number of planes passing through $x$.

Following the notation of Hunt \cite[Section 2.1]{Hunt}, for $q\geq 2$ and
$p\geq 3$ we set
\[
t_q^{(1)}
=
\bigl|\{L\in L_2(\cA)\mid m(L)=q\}\bigr|,
\]
\[
t_p
=
\bigl|\{x\in L_3(\cA)\mid \nu(x)=p\}\bigr|,
\]
and
\[
t_{pq}
=
\bigl|\{(x,L)\in L_3(\cA)\times L_2(\cA)
\mid x\in L,\ \nu(x)=p,\ m(L)=q\}\bigr|.
\]
Thus $t_{pq}$ is an incidence number.%: if a $p$-fold point lies on two distinct $q$-fold lines, both incidences are counted.

These numerical data satisfy the two elementary identities recorded in
\cite[Section 2.1]{Hunt}. Counting pairs of planes gives
\begin{equation}
\label{eq:GS-pair-count}
\sum_{q\geq 2} t_q^{(1)}\binom{q}{2}
=
\binom{n}{2}.
\end{equation}
Counting triples of planes gives
\begin{equation}
\label{eq:GS-triple-count}
\sum_{p\geq 3} t_p\binom{p}{3}
-
\sum_{q\geq 3}
\left(
\sum_{p\geq 3}t_{pq}-t_q^{(1)}
\right)
\binom{q}{3}
=
\binom{n}{3}.
\end{equation}
Indeed, the first sum in \eqref{eq:GS-triple-count} counts a triple of
planes once at every vertex lying on their common intersection. If the
three planes belong to a $q$-fold line, then this triple has been counted
once for every vertex on that line, whereas it should contribute only
once. For a fixed $q$, the excess is therefore
\[
\left(
\sum_{p\geq 3}t_{pq}-t_q^{(1)}
\right)\binom{q}{3},
\]
which explains the correction term. As a consistency check, for an arrangement in general position we have
\[
t_2^{(1)}=\binom{n}{2},\qquad t_3=\binom{n}{3},
\]
and all higher multiplicity data vanish.
\begin{example} Let us illustrate the meaning of the incidence numbers $t_{pq}$ on the Coxeter arrangement $\cA^3_1(16)$. 
Recall that $t_{63}$ counts pairs $(x,L)$ such that $x$ is a $6$-fold vertex, $L$ is a $3$-fold intersection line, and $x\in L$. The $6$-fold vertices of $\cA^3_1(16)$ are represented by 
\[ [\varepsilon_1:\varepsilon_2:\varepsilon_3:\varepsilon_4], \qquad \varepsilon_i\in\{\pm1\}, \] 
where simultaneous multiplication of all coordinates by $-1$ gives the same projective point. 
Hence there are $2^4/2=8$ such vertices. Consider, for instance, \[ P=[1:1:1:1]. \] 
Exactly six hyperplanes of $\cA^3_1(16)$ pass through $P$, namely \[ t-x=0,\quad t-y=0,\quad t-z=0,\quad x-y=0,\quad x-z=0,\quad y-z=0. \] Among the intersection lines through $P$, precisely four have multiplicity three: \[ \{t=x=y\},\qquad \{t=x=z\},\qquad \{t=y=z\},\qquad \{x=y=z\}. \] For example, the line $\{t=x=y\}$ is contained in the three hyperplanes \[ t-x=0,\qquad t-y=0,\qquad x-y=0. \] Thus every $6$-fold vertex is incident with exactly four $3$-fold lines. Consequently, \[ t_{63}=8\cdot4=32. \] Equivalently, $\cA^3_1(16)$ has $16$ triple lines, each containing two $6$-fold vertices, which gives the same incidence count $t_{63}=16\cdot2=32$. \end{example}
\begin{lemma}
\label{lem:face-numbers-incidence}
With the notation above, one has
\[
f_0
=
\sum_{p\geq 3}t_p,
\]
\[
f_1
=
\sum_{p\geq 3}\sum_{q\geq 2}t_{pq},
\]
and
\[
f_2
=
n
+
\sum_{p\geq 3}\sum_{q\geq 2}q\,t_{pq}
-
\sum_{p\geq 3}p\,t_p.
\]
\end{lemma}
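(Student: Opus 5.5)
The plan is to identify each face number with a count over the intersection lattice. The cell decomposition of $\PP^3(\RR)$ induced by $\cA$ has the rank-three flats as its $0$-cells. The $1$-cells are the arcs into which the vertices cut the intersection lines. The $2$-cells are the chambers of the restrictions $\cA^H$. Throughout I use essentiality, which ensures that no degenerate configurations (pencils, lines without vertices) occur.

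\emph{Vertices and edges.} Every $0$-cell is a point where at least three independent planes meet, i.e.\ a rank-three flat $x$. Conversely, every $x\in L_3(\cA)$ is a $0$-cell and has $\nu(x)\geq 3$. Summing over the possible values of $\nu(x)$ gives $f_0=\sum_{p\geq3}t_p$.

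Each $1$-cell lies in a unique intersection line $L\in L_2(\cA)$, namely the intersection of all planes containing it. Fix $L$. Since $\cA$ is essential, some plane of $\cA$ does not contain $L$, so $L$ carries at least one vertex. Hence $L\cong S^1$ is cut by its $k_L\geq1$ vertices into exactly $k_L$ open arcs. Summing $k_L$ over all $L$ counts incidences $(x,L)$ with $x\in L$. Sorting these by $(\nu(x),m(L))=(p,q)$ gives $f_1=\sum_{p\geq3}\sum_{q\geq2}t_{pq}$.

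\emph{Two-dimensional faces.} By Proposition~\ref{prop:restriction-criterion}, $f_2=\sum_{H\in\cA} r(\cA^H)$, so it suffices to compute $r(\cA^H)$ for each $H$. The lines of $\cA^H$ are the $L\in L_2(\cA)$ with $L\subset H$. Its points are the $x\in L_3(\cA)$ with $x\in H$. The multiplicity of such an $x$ in $\cA^H$ is $k_H(x)=\card{\{L\in L_2(\cA)\mid x\in L\subset H\}}$.

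Essentiality of $\cA$ implies that $\cA^H$ is essential in $H\cong\PP^2(\RR)$, i.e.\ not a pencil. Hence the cell decomposition of $H$ has $0$-cells the points of $\cA^H$. By the same circle argument as above, its $1$-cells number $\sum_{x\in H}k_H(x)$. Euler's relation $\chi(\PP^2(\RR))=1$ then gives
\[
r(\cA^H)=1+\sum_{x\in H}k_H(x)-\card{\{x\in L_3(\cA)\mid x\in H\}}.
\]

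Summing over $H\in\cA$, the constant terms contribute $n$. The last term contributes $\sum_x\nu(x)=\sum_{p}p\,t_p$, since each vertex $x$ lies in exactly $\nu(x)$ planes. The middle term counts triples $(H,x,L)$ with $x\in L\subset H$. Grouping by the incidence $(x,L)$, each incidence appears $m(L)$ times, so the double sum equals $\sum_{p\geq3}\sum_{q\geq2}q\,t_{pq}$. This yields the stated formula for $f_2$.

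\emph{Main obstacle.} The only delicate point is the justification of the nondegeneracy facts. One must check that every line carries a vertex and that each restriction $\cA^H$ is not a pencil, so that the circle-cutting counts and the Euler relation in $H$ apply. I would derive both from essentiality. The covectors of planes not containing $L$ (respectively, the restricted covectors on $H$) must span the appropriate quotient of $V^*$, otherwise all $a_{H'}$ would lie in a proper subspace of $V^*$. The remaining steps are bookkeeping of double counts.
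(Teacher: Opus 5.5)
Your proposal is correct and follows essentially the same route as the paper: you count vertices directly, count edges as vertex--line incidences after using essentiality to put a vertex on every intersection line, and obtain $f_2$ by applying Euler's relation in each restriction $\cA^H$ and summing via $f_2=\sum_{H}r(\cA^H)$. The only difference is that you also check explicitly that each $\cA^H$ is essential, which the paper leaves implicit.
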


\begin{proof}
The first identity is immediate. We first note that every
$L\in L_2(\cA)$ contains a vertex. Indeed, in the central picture let
$X$ be the two-dimensional subspace corresponding to $L$. The
restrictions to $X$ of the defining forms of the hyperplanes not
containing $X$ must span $X^*$; otherwise they would have a nonzero
common kernel in $X$, and that vector would lie in every hyperplane of
the arrangement, contradicting essentiality. Hence at least two
distinct rank-three flats lie on $L$.

For the second identity, every one-dimensional cell lies on a unique
intersection line. A projective line containing $r$ vertices is divided
by them into exactly $r$ one-dimensional cells.
Consequently, $f_1$ is the total number of incidences between vertices
and intersection lines, which is
\[
\sum_{p\geq 3}\sum_{q\geq 2}t_{pq}.
\]
For $H\in\cA$, let $\cA^H$ be the reduced restriction and, for a vertex
$x\in H$, put
\[
\lambda_H(x)
=
\bigl|\{L\in L_2(\cA)\mid x\in L\subset H\}\bigr|.
\]
Again the Euler formula for the projective line arrangement $\cA^H$ gives us
\[
r(\cA^H)
=
1+
\sum_{\substack{x\in L_3(\cA)\\x\in H}}
\bigl(\lambda_H(x)-1\bigr).
\]
Summing over $H\in\cA$ and using the fact
\[
f_2=\sum_{H\in\cA}r(\cA^H),
\]
we obtain
\[
f_2
=
n+
\sum_{x\in L_3(\cA)}
\left(
\sum_{\substack{L\in L_2(\cA)\\x\in L}}m(L)-\nu(x)
\right).
\]
Finally,
\[
\sum_{x\in L_3(\cA)}\nu(x)
=
\sum_{p\geq 3}p\,t_p
\]
and
\[
\sum_{x\in L_3(\cA)}
\sum_{\substack{L\in L_2(\cA)\\x\in L}}m(L)
=
\sum_{p\geq 3}\sum_{q\geq 2}q\,t_{pq},
\]
which gives the asserted formula for $f_2$.
\end{proof}

Combining Proposition~\ref{prop:face-simpliciality-RP3} with
Lemma~\ref{lem:face-numbers-incidence} gives us the announced simpliciality criterion.

\begin{theorem}[Simpliciality criterion in $\PP^3(\RR)$]
\label{thm:simpliciality-RP3}
Let
\[
\cA=\{H_1,\ldots,H_n\}
\]
be an essential arrangement of real projective planes in
$\PP^3(\RR)$. Then $\cA$ is simplicial if and only if
\begin{equation}
\label{eq:simpliciality-GS-data}
n
+
\sum_{p\geq 3}(2-p)t_p
+
\sum_{q\geq 3}(q-2)
\sum_{p\geq 3}t_{pq}
=0.
\end{equation}
\end{theorem}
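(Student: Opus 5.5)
The plan is to substitute the face-number expressions of Lemma~\ref{lem:face-numbers-incidence} into condition (3) of Proposition~\ref{prop:face-simpliciality-RP3}, namely $2f_0-2f_1+f_2=0$. Since that proposition already shows that (3) is equivalent to simpliciality, it will be enough to check that $2f_0-2f_1+f_2$ coincides identically with the left-hand side of \eqref{eq:simpliciality-GS-data}.

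Carrying out the substitution gives
\[
2f_0-2f_1+f_2
=
2\sum_{p\geq 3}t_p-2\sum_{p\geq 3}\sum_{q\geq 2}t_{pq}
+n+\sum_{p\geq 3}\sum_{q\geq 2}q\,t_{pq}-\sum_{p\geq 3}p\,t_p .
\]
I will then group terms. The vertex terms combine to $\sum_{p\geq3}(2-p)t_p$, and the incidence terms combine to $\sum_{p\geq 3}\sum_{q\geq 2}(q-2)t_{pq}$. The summand with $q=2$ carries the factor $q-2=0$, so it drops out. After interchanging the order of the finite double sum, what remains is $\sum_{q\geq3}(q-2)\sum_{p\geq3}t_{pq}$. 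Hence $2f_0-2f_1+f_2$ equals the left-hand side of \eqref{eq:simpliciality-GS-data}, and the equivalence follows.

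I expect no real obstacle, since all the substantive work is in the lemma. That includes the facts that every intersection line carries at least two vertices, so that $f_1$ counts incidences, and the per-plane Euler count giving $f_2$. The only point to watch is that the indexing ranges agree: $t_{pq}$ is defined for $p\geq 3$ and $q\geq 2$, and every vertex has multiplicity at least $3$ because the arrangement is essential and vertices are rank-three flats. With these ranges matching, the cancellation of the $q=2$ terms is legitimate.
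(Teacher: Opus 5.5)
Your proposal is correct and is essentially the paper's own proof: substitute the face counts of Lemma~\ref{lem:face-numbers-incidence} into the condition $2f_0-2f_1+f_2=0$ of Proposition~\ref{prop:face-simpliciality-RP3}, collect the vertex and incidence terms, and drop the vanishing $q=2$ contributions. One small correction: $\nu(x)\geq 3$ holds simply because $x$ is a rank-three flat, whereas essentiality is what guarantees that vertices exist and that every intersection line carries at least one of them.
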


\begin{proof}
By Proposition~\ref{prop:face-simpliciality-RP3}, the arrangement is
simplicial if and only if
\[
2f_0-2f_1+f_2=0.
\]
Using Lemma~\ref{lem:face-numbers-incidence}, we obtain
\[
\begin{aligned}
0= 2f_0-2f_1+f_2
&=
2\sum_{p\geq 3}t_p
-
2\sum_{p\geq 3}\sum_{q\geq 2}t_{pq}
+n
+
\sum_{p\geq 3}\sum_{q\geq 2}q\,t_{pq}
-
\sum_{p\geq 3}p\,t_p\\
&=
n
+
\sum_{p\geq 3}(2-p)t_p
+
\sum_{p\geq 3}\sum_{q\geq 2}(q-2)t_{pq}.
\end{aligned}
\]
The contribution with $q=2$ is zero, so the last double sum may be
restricted to $q\geq 3$, and this proves \eqref{eq:simpliciality-GS-data}.
\end{proof}

\begin{example}
\label{ex:four-planes-general-position}
Consider four planes in general position in $\PP^3(\RR)$. This is the
projective tetrahedral arrangement. We have
\[
n=4,
\qquad
t_2^{(1)}=6,
\qquad
t_3=4,
\]
while $t_q^{(1)}=0$ for $q\geq 3$ and $t_p=0$ for $p\geq 4$. All incidences
between vertices and intersection lines have $q=2$, and therefore the
last term in \eqref{eq:simpliciality-GS-data} vanishes. Hence
\[
4+(2-3)\cdot 4=0,
\]
so Theorem~\ref{thm:simpliciality-RP3} recovers the simpliciality of the
tetrahedral arrangement.
\end{example}

\begin{remark}
Theorem \ref{thm:simpliciality-RP3} may be viewed as the three-dimensional analogue of Melchior's classical criterion for line arrangements in the real projective plane \cite{Melchior}. More precisely, let $\mathcal{L}\subset \mathbb{P}^{2}(\mathbb{R})$ be an arrangement of $d\geq 3$ lines which is not a pencil. Then $\mathcal{L}$ is simplicial if and only if
\[
n_{2}=3+\sum_{r\geq 4}(r-3)n_{r},
\]
where $n_r$ denotes the number of intersection points of multiplicity $r$ in $\mathcal{L}$.
\end{remark}

\section{Restrictions and Ziegler multirestrictions}
\label{sec:ziegler-restrictions}

The reduced restrictions introduced above are also the natural point of
contact with the classical Gr\"unbaum--Shephard diagrams. For a fixed plane
$H\in\cA$, the induced line arrangement in $H$ consists of the
projective lines
\[
H\cap H',\qquad H'\in\cA\setminus\{H\},
\]
after repetitions have been removed. Thus, from a modern
arrangement-theoretic viewpoint, the planar arrangements displayed in the
catalogue record reduced restrictions $\cA^H$.

There is a slightly finer object which retains the repetitions. Define a
multiplicity on the lines of $\cA^H$ by
\[
m^H(L)
=
\bigl|\{H'\in\cA\setminus\{H\}\mid H\cap H'=L\}\bigr|.
\]
Then
\[
(\cA^H,m^H)
\]
is the projectivized Ziegler multirestriction of the corresponding central
arrangement to $H$; see \cite{Ziegler}. Since $m(L)$ denotes the number of
planes of $\cA$ containing the rank-two flat $L$, one has simply
\[
m^H(L)=m(L)-1
\qquad\text{for every }L\subset H.
\]
Hence the support of the Ziegler multirestriction is exactly the reduced
restriction $\cA^H$, while the multiplicity remembers how many planes
of $\cA$ induce the same line in $H$.

The multirestriction is also compatible with the freeness.  Ziegler's
Restriction Theorem \cite{Ziegler} tells us that if $\cA$ is a free arrangement with exponents
\[
 \exp(\cA)=(1,d_2,d_3,d_4),
\]
then, for every $H\in\cA$, the central rank-three multiarrangement represented
by the projectivized Ziegler multirestriction $(\cA^H,m^H)$ is free, with
exponents
\[
 \exp(\cA^H,m^H)=(d_2,d_3,d_4).
\]
Thus the multirestriction retains homological information which is lost by
passing only to the reduced support $\cA^H$.  Notice that this is a one-way
implication as stated: freeness of the ambient arrangement forces freeness
of the Ziegler multirestriction, whereas the reduced restriction alone does
not provide a converse freeness criterion in rank four, see \cite[Corollary 1.35]{Yoshinaga}.

This observation is particularly useful for the Gr\"unbaum--Shephard
catalogue. The authors do not organize their examples by explicit defining
polynomials. Instead, they describe geometric constructions, symmetry
orbits of planes, and the induced planar arrangements in representative
planes. Consequently, the figures and their
plane labels can be read as restriction data. Whenever the labels determine
which ambient planes induce a given line in $H$, retaining these repetitions
recovers the multiplicity $m^H$ and hence the corresponding Ziegler
multirestriction. This means that the classical catalogue contains more information than the
$f$-vectors and valence tables alone suggest! After reconstructing explicit
realizations, one can use the displayed restrictions as consistency checks
for the rank-two flats and, at the same time, as input for computations of
multiarrangement exponents, freeness, and restriction-theoretic invariants.
We stress that this is our modern reinterpretation of the catalogue: Ziegler
multirestrictions are not part of the terminology and they are not considered in 
\cite{GrunbaumShephard}.

\begin{example}[The arrangement $\mathcal A^3_1(10)$]
Consider the Coxeter arrangement of type $A_4$,
\[
Q_{\mathcal A}
=txyz(x-t)(y-t)(z-t)(x-y)(x-z)(y-z).
\]
In the notation of Gr\"unbaum--Shephard this is the simplicial
arrangement $\mathcal A^3_1(10)$.  Their construction from a regular
tetrahedron distinguishes four facet planes, denoted by $\alpha$, and
six symmetry planes, denoted by $\beta$.

Let
\[
H=\{t=0\},
\]
which may be chosen as one of the $\alpha$-planes.  The reduced
restriction to $H\simeq\mathbb P^2$ is
\[
\mathcal A^H=
\{x,y,z,x-y,x-z,y-z\},
\]
and hence is the six-line simplicial arrangement
$\mathcal A^2_1(6)$ displayed in Figure~\ref{fig:A4-restriction}.

\begin{figure}[htbp]
    \centering
    \includegraphics[scale=0.25]{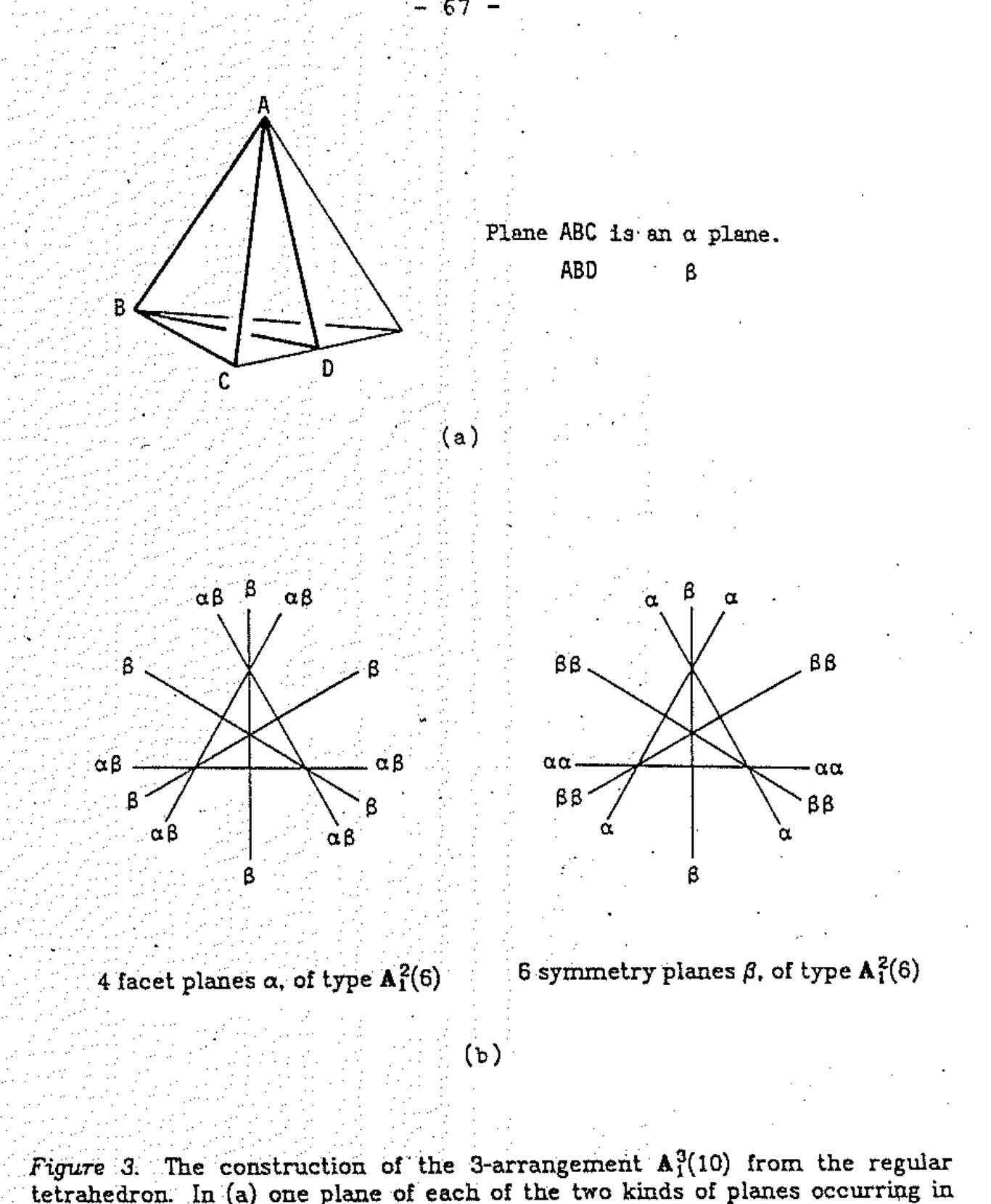}
    \caption{The arrangement $\mathcal A^3_1(10)=\mathcal A(A_4)$ and its planar restrictions, taken from \cite[Figure~3, p.~67]{GrunbaumShephard}.}
    \label{fig:A4-restriction}
\end{figure}
The labels in the original Gr\"unbaum--Shephard diagram contain more
information than the reduced restriction.  Indeed,
\[
H\cap\{x=0\}=H\cap\{x-t=0\},
\]
and similarly for $y$ and $z$.  Consequently
\[
m^H(x)=m^H(y)=m^H(z)=2,
\]
whereas
\[
m^H(x-y)=m^H(x-z)=m^H(y-z)=1.
\]
Thus the corresponding Ziegler multirestriction is represented by
\[
Q_{(\mathcal A^H,m^H)}
=x^2y^2z^2(x-y)(x-z)(y-z).
\]
In the classical diagram the three lines of multiplicity two are
precisely those marked $\alpha\beta$, while the remaining three lines
are marked $\beta$.  Thus the Gr\"unbaum--Shephard labels recover exactly
the multiplicity function $m^H$ which is forgotten by passing to the
reduced restriction. Finally, since
\[
\exp(\mathcal A)=(1,2,3,4),
\]
Ziegler's restriction theorem gives
\[
\exp(\mathcal A^H,m^H)=(2,3,4).
\]
\end{example}

\section{The rank-four special-vertex property}
\label{sec:special}
We now introduce the special-vertex condition in a form adapted to
projective arrangements.  This is the intrinsic projective version of the
coordinate model used by Cuntz in \cite{Cuntz}.  The crucial difference is the integrality convention for the affine shift sets.  Allowing proportional representatives, this causes no restriction: a hyperplane
$\alpha+s t=0$ with $s\neq0$ may be encoded using the proportional
representative $\alpha/s$ and the shift set $\{0,1\}$, while $s=0$
already gives the member of the trace class passing through $P$.  Thus a
trace class may be split into proportional representatives and rescaled
separately.  The intrinsic version is more convenient for the examples and
non-existence arguments below.

\begin{definition}\label{def:special}
Let $\cA$ be an essential arrangement in $\PP^3(\RR)$.  We say that a pair $(H_\infty,P)$, where $H_\infty\in\mathcal A$ and
$P\not \in H_\infty$ is a point, is a special-vertex
pair if for every
\[
H\in\mathcal A\setminus\{H_\infty\}
\]
there exists a hyperplane $H_P\in\mathcal A$ containing $P$ such that
\[
H\cap H_\infty=H_P\cap H_\infty.
\]
Equivalently, every trace on $H_\infty$ occurring in the arrangement
is represented by a hyperplane of $\mathcal A$ passing through $P$.

In this situation, we say that $\cA$ is a \emph{special-vertex arrangement}.
\end{definition}

The following lemma gives the corresponding coordinate normal form.  Here
$[t:x:y:z]$ are homogeneous coordinates on $\PP^3$, and
$(\RR^3)^*$ is identified with the linear forms in $x,y,z$.

\begin{lemma}\label{lem:intrinsic-special-vertex}
An essential arrangement $\cA$ in $\PP^3(\RR)$ admits the
special-vertex pair
\[
 P=[1:0:0:0],\qquad H_\infty=\{t=0\}
\]
if and only if it can be written in the form
\begin{equation}
 \cA
 =\{t=0\}\cup
 \{\alpha(x,y,z)+kt=0\mid \alpha\in\mathcal R,\ k\in\Sigma_\alpha\}.
\label{eq:special-normal-form}
\end{equation}
where $\mathcal R\subset(\RR^3)^*$ is a finite set of chosen,
pairwise non-proportional nonzero covectors spanning $(\RR^3)^*$, and
each $\Sigma_\alpha\subset\RR$ is finite, nonempty, and contains $0$.
After an arbitrary projective change of coordinates, the same statement
holds for any special-vertex pair $(P,H_\infty)$.
\end{lemma}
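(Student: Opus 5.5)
We prove both directions directly from Definition~\ref{def:special}, working in the coordinates $[t:x:y:z]$ with $P=[1:0:0:0]$ and $H_\infty=\{t=0\}$. The basic observation is that every hyperplane $H\neq H_\infty$ has a covector of the form $\alpha+kt$ with $\alpha$ a nonzero linear form in $x,y,z$. If $\alpha$ were zero, $H$ would coincide with $H_\infty$. The trace of $H$ is
\[
H\cap H_\infty=\{t=0,\ \alpha=0\},
\]
and this trace depends only on the class of $\alpha$ up to proportionality. Moreover, $H$ contains $P$ exactly when $k=0$. Two hyperplanes $\alpha+kt$ and $\alpha'+k't$ have the same trace on $H_\infty$ if and only if $\alpha'$ is proportional to $\alpha$.

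\emph{Necessity.} Assume $(H_\infty,P)$ is a special-vertex pair. Group the hyperplanes of $\cA\setminus\{H_\infty\}$ into trace classes, i.e.\ by the proportionality class of $\alpha$. For each class the definition supplies a member $H_P$ through $P$, so $H_P=\{\alpha=0\}$ for some representative $\alpha$. We fix this representative as the chosen element of $\mathcal R$. Every other member of the class can then be written, after rescaling, as $\alpha+kt$ for a unique real $k$, because its $x,y,z$-part is a nonzero multiple of $\alpha$ and we divide by that multiple. Let $\Sigma_\alpha$ be the set of shifts obtained in this way; it is finite, and it contains $0$ because $H_P\in\cA$. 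Distinct trace classes give non-proportional elements of $\mathcal R$ by construction.

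It remains to check that $\mathcal R$ spans $(\RR^3)^*$. Essentiality says that $t$ together with all the forms $\alpha+kt$ span $V^*$. Taking the quotient by $\langle t\rangle$, the images $\alpha$ must then span $V^*/\langle t\rangle\cong(\RR^3)^*$. This yields the normal form \eqref{eq:special-normal-form}. Finally, $H_\infty$ must belong to $\cA$, which it does by definition.

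\emph{Sufficiency.} Conversely, suppose $\cA$ has the form \eqref{eq:special-normal-form}. Then $P\notin\{t=0\}$, and $\{t=0\}\in\cA$. Any $H=\{\alpha+kt=0\}$ has the same trace on $H_\infty$ as $\{\alpha=0\}$, and $\{\alpha=0\}$ lies in $\cA$ because $0\in\Sigma_\alpha$ and passes through $P$. Hence $(H_\infty,P)$ is a special-vertex pair. Essentiality follows from the spanning hypothesis: $\mathcal R$ together with $t$ spans $V^*$. The listed hyperplanes are pairwise distinct because the elements of $\mathcal R$ are pairwise non-proportional.

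\emph{General special-vertex pairs.} For an arbitrary pair $(P,H_\infty)$, choose a projective transformation sending $P$ to $[1:0:0:0]$ and $H_\infty$ to $\{t=0\}$. Such a transformation exists since $P\notin H_\infty$: take a basis of $V$ whose first vector represents $P$ and whose remaining three vectors span the subspace defining $H_\infty$. The special-vertex condition is phrased only in terms of incidences and intersections, so it is preserved under projective changes of coordinates, and the previous case applies.

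There is no real obstacle here. The only point needing care is the bookkeeping of representatives: the normalization of each trace class must be fixed by the member through $P$, so that the shifts $k$ are well defined and $0\in\Sigma_\alpha$.
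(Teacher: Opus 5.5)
Your proposal is correct and follows essentially the same route as the paper's proof. You group the hyperplanes other than $H_\infty$ by trace class and normalize each class by its member through $P$, so that $0\in\Sigma_\alpha$, use essentiality for spanning, and for the converse pair $\{\alpha+kt=0\}$ with $\{\alpha=0\}$. You also make explicit a few points the paper leaves implicit: the quotient-by-$\langle t\rangle$ argument for spanning, the distinctness of the listed hyperplanes, and the basis realizing the coordinate change.
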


\begin{proof}
Assume first that $(P,H_\infty)$ is a special-vertex pair and choose
coordinates with $H_\infty=\{t=0\}$ and $P=[1:0:0:0]$.  Every hyperplane
through $P$ has an equation independent of $t$, say
$\alpha(x,y,z)=0$.  If another hyperplane $H$ has the same trace on
$H_\infty$, then the restriction of its defining form to $t=0$ is
proportional to $\alpha$.  After rescaling, $H$ has an equation
\[
 \alpha(x,y,z)+kt=0
\]
for some $k\in\RR$.  Choose one representative $\alpha$ for each trace
class.  Grouping the hyperplanes by their trace on $H_\infty$ produces
the finite sets $\Sigma_\alpha$, and the hyperplane through $P$ in each
class gives $0\in\Sigma_\alpha$.  Essentiality implies that the chosen
covectors $\alpha$ span $(\RR^3)^*$.

Conversely, suppose that $\cA$ has the form \eqref{eq:special-normal-form}.
For a hyperplane
$H=\{\alpha(x,y,z)+kt=0\}$ take
$H_P=\{\alpha(x,y,z)=0\}$, which belongs to $\cA$ because
$0\in\Sigma_\alpha$.  Then $P\in H_P$ and
$H\cap H_\infty=H_P\cap H_\infty$.  Hence $(P,H_\infty)$ is a
special-vertex pair.
\end{proof}

\begin{remark}
The pair $(P,H_\infty)$ is part of the structure.  An arrangement can
admit more than one such pair, and a hyperplane which passes through the
special vertex for one choice may fail to do so for another.
\end{remark}

\section{Irreducible simplicial arrangements with a special vertex}
\label{sec:positive}
In this section we study the notion of arrangements with a special vertex in the class of irreducible simplicial arrangements. We present here some instructive examples.
\subsection{The arrangement of type \texorpdfstring{$A_4$}{A4}}

\begin{example}\label{ex:A4}
Consider the reflection arrangement of type $A_4$ in coordinates
$[t:x:y:z]$ with defining polynomial
\[
 Q_{A_4}
 =txyz(x-t)(y-t)(z-t)(x-y)(x-z)(y-z).
\]
This is the simplicial arrangement
$\cA^3_1(10)=\cA(A_4)$ in the notation of the catalogue
\cite{Geis}.  Take
\[
 H_\infty=\{t=0\},
 \qquad
 P=[1:0:0:0].
\]
The six hyperplanes
\[
 x=0,\quad y=0,\quad z=0,\quad
 x-y=0,\quad x-z=0,\quad y-z=0
\]
pass through $P$.  The remaining three hyperplanes are
\[
 x-t=0,\qquad y-t=0,\qquad z-t=0.
\]
Consequently,
\[
 \Sigma_x=\Sigma_y=\Sigma_z=\{0,-1\},
\]
whereas
\[
 \Sigma_{x-y}=\Sigma_{x-z}=\Sigma_{y-z}=\{0\}.
\]
Thus $\cA(A_4)$ is an irreducible simplicial special-vertex arrangement.
\end{example}

\subsection{The arrangement of type \texorpdfstring{$B_4$}{B4}}

\begin{example}\label{ex:B4}
The Coxeter arrangement of type $B_4$ is defined by
\[
 Q_{B_4}=txyz
 \prod_{\varepsilon=\pm1}
 (t+\varepsilon x)(t+\varepsilon y)(t+\varepsilon z)
 \prod_{\varepsilon=\pm1}
 (x+\varepsilon y)(x+\varepsilon z)(y+\varepsilon z).
\]
It is the simplicial arrangement
$\cA^3_1(16)=\cA(B_4)$.  Again take
\[
 H_\infty=\{t=0\},
 \qquad
 P=[1:0:0:0].
\]
The hyperplanes through $P$ form the $B_3$ arrangement
\[
 x=0,\quad y=0,\quad z=0,\quad
 x\pm y=0,\quad x\pm z=0,\quad y\pm z=0.
\]
The noncentral hyperplanes are
\[
 t\pm x=0,\qquad t\pm y=0,\qquad t\pm z=0.
\]
Equivalently,
\[
 \Sigma_x=\Sigma_y=\Sigma_z=\{-1,0,1\},
\]
and all other direction classes have shift set $\{0\}$.  Therefore
$\cA(B_4)$ is special-vertex.  It is also irreducible and simplicial.
\end{example}

\subsubsection{A simplicial deletion chain inside \texorpdfstring{$B_4$}{B4}}

Delete coordinate hyperplanes among
$\{t=0,x=0,y=0,z=0\}$ from $\cA(B_4)$.  Up to the symmetry of $B_4$,
deleting one, two, or three such hyperplanes produces the catalogue entries
\[
 \cA^3_1(15),\qquad \cA^3_1(14),\qquad \cA^3_1(13),
\]
respectively.  These arrangements are irreducible and simplicial.  Each
continues to satisfy the intrinsic criterion, although the convenient
choice of $H_\infty$ changes after deletion.

\begin{proposition}\label{prop:B4-chain}
The arrangements
\[
 \cA^3_1(13),\qquad \cA^3_1(14),\qquad
 \cA^3_1(15),\qquad \cA(B_4)
\]
are irreducible simplicial special-vertex arrangements.  The following
pairs give certificates for the three proper subarrangements.
\[
\begin{array}{c|c|c|c}
\toprule
\text{Arrangement}&\text{deleted hyperplanes}&H_\infty&P\\
\midrule
\cA^3_1(15)&t=0&t+x=0&[1:0:0:0]\\
\cA^3_1(14)&t=0,\ x=0&x+y=0&[0:1:0:0]\\
\cA^3_1(13)&t=0,\ x=0,\ y=0&y+z=0&[0:0:1:0]\\
\bottomrule
\end{array}
\]
\end{proposition}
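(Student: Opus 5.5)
The plan is to verify the three assertions (special vertex, simpliciality, irreducibility) separately for each of the four arrangements. The case $\cA(B_4)$ is already covered by Example~\ref{ex:B4}, together with the known simpliciality and irreducibility of Coxeter arrangements. So the work concerns the three deletions.

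\textbf{Special-vertex certificates.} I will check Definition~\ref{def:special} directly for each row of the table, using the recipe of Lemma~\ref{lem:intrinsic-special-vertex}. First note that $P\notin H_\infty$ in every row, since $[1:0:0:0]$ is not on $t+x=0$, $[0:1:0:0]$ is not on $x+y=0$, and $[0:0:1:0]$ is not on $y+z=0$. The hyperplanes through $P$ are exactly those whose linear form does not involve the coordinate singled out by $P$. For every other hyperplane I substitute the equation of $H_\infty$ and check that the resulting trace is proportional to a form through $P$. Hyperplanes already through $P$ are their own $H_P$.
\begin{itemize}
\item For $\cA^3_1(15)$, substituting $t=-x$ gives $t-x\mapsto -2x$ and $t\pm y\mapsto -x\pm y$, $t\pm z\mapsto -x\pm z$. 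These are proportional to $x$, $x\mp y$, $x\mp z$, which pass through $P$.
\item For $\cA^3_1(14)$, substituting $x=-y$ gives $x-y\mapsto -2y$, $x\pm z\mapsto -y\pm z$ and $t\pm x\mapsto t\mp y$. All of these lie in the arrangement and avoid $x$.
\item For $\cA^3_1(13)$, substituting $y=-z$ gives $y-z\mapsto-2z$, $t\pm y\mapsto t\mp z$ and $x\pm y\mapsto x\mp z$. Again these are members avoiding $y$.
\end{itemize}
This settles the special-vertex property.

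\textbf{Simpliciality.} I will apply Theorem~\ref{thm:simpliciality-RP3}. Starting from $\cA(B_4)$, I will record how the vertex multiplicities $t_p$ and the incidences $t_{pq}$ with $q\ge3$ change when a coordinate hyperplane is removed. Removing a hyperplane lowers $\nu(x)$ by one at the vertices it contains. It lowers $m(L)$ at the lines it contains, and a line of multiplicity $2$ may disappear or a vertex may cease to be a rank-three flat. I would tabulate the local structure at each $B_4$-vertex type, namely the coordinate points, the points $[\pm1:\pm1:\pm1:\pm1]$, the points of type $[1:\pm1:0:0]$ and $[1:\pm1:\pm1:0]$, and so on. Then I evaluate the left side of \eqref{eq:simpliciality-GS-data} for $n=15,14,13$ and check that it vanishes.

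As an alternative cross-check, I would use Proposition~\ref{prop:restriction-criterion}. The restrictions are $B_3$-type line arrangements or their deletions, and counting their regions and comparing with $r(\cA)$ via Zaslavsky is quick. This bookkeeping is the step I expect to be the main obstacle. The risk is vertices that drop to rank three with lower multiplicity, or vertices that stop being flats once two or three coordinate planes are gone. I would guard against this by listing the flats explicitly, up to the residual symmetry group $S_3$ or $S_2$ acting on the remaining coordinates.

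\textbf{Irreducibility.} I will use the criterion that if the covectors split as $V_1^*\oplus V_2^*$, then any three pairwise distinct hyperplanes through a common line of multiplicity $\ge3$ must lie on the same side. Hence the arrangement is irreducible if the graph joining $H,H'$ whenever $H\cap H'$ has multiplicity $\ge3$ is connected. In all three deletions this is clear from the triple lines that survive. For example, $\{x=y=z\}$ connects $x-y,x-z,y-z$. The triples $\{t-x,t-y,x-y\}$ and their sign variants connect each $t\pm x$ to the $x\pm y$ forms. In $\cA^3_1(13)$, the remaining coordinate plane $z=0$ is linked through the pencil $\{z,\,x+z,\,x-z\}$ together with the form $x$, and the pencil $x=z=0$ remains triple because $x\pm z$ and $z$ are present. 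A short check of such pencils shows that the graph is connected in each case.
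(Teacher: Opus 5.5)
Your special-vertex verification is the same as the paper's proof. The hyperplanes through $P$ are those whose forms omit the coordinate singled out by $P$. Substituting the equation of $H_\infty$ then matches every remaining trace with such a form, and all your substitutions are correct.

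The real difference is elsewhere. The paper does not prove irreducibility or simpliciality of the three deletions; it takes both from Geis's catalogue. You try to establish them directly.

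Your irreducibility argument is valid and complete. A linearly dependent triple of pairwise non-proportional covectors cannot straddle a splitting $V_1^*\oplus V_2^*$. So for an essential arrangement, connectivity of the graph of lines of multiplicity at least three rules out reducibility. The phrase ``together with the form $x$'' is a slip, since $x=0$ is deleted in $\cA^3_1(13)$. The line $\{x=z=0\}$ is nonetheless still triple there via $z$ and $x\pm z$, as you then say.

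Your simpliciality step, however, is only announced, so as written it adds nothing beyond the paper's citation. The restriction route of Proposition~\ref{prop:restriction-criterion} closes it in a few lines:
\begin{itemize}
\item Restriction of any of these arrangements to a surviving coordinate hyperplane is $\cA(B_3)$, with $24$ regions. Using $r(\cA)=r(\cA\setminus\{H\})+r(\cA^H)$ and starting from $r(\cA(D_4))=96$, you get $r=96+24(n-12)$, i.e.\ $168,144,120$ for $n=15,14,13$.
\item Restriction to $x_i\pm x_j=0$ is the $D_3$ arrangement in $x_i,x_k,x_l$ together with some coordinate lines. The line $x_i$ is always present, coming from $x_i\mp x_j$; the lines $x_k,x_l$ appear when they belong to the arrangement. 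The restriction has $16$, $20$ or $24$ regions according as one, two or three coordinate lines occur.
\end{itemize}
Summing over the hyperplanes gives
\[
 \sum_{H}r(\cA^H)=72+144+120=336,\qquad 48+48+32+160=288,\qquad 24+120+96=240
\]
for $n=15,14,13$. This equals $2r(\cA)$ in each case.

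With this inserted, your proof is self-contained where the paper relies on the catalogue. The paper's version is shorter, but it rests entirely on identifying the deletions with catalogue entries.
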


\begin{proof}
The irreducibility and simpliciality are recorded in the catalogue
\cite{Geis}.  It remains to verify the special-vertex property, and by Lemma
\ref{lem:intrinsic-special-vertex} it is enough to check the displayed
pairs.

We give the computation for $\cA^3_1(15)=\cA(B_4)\setminus\{t=0\}$.
Choose
\[
 H_\infty=\{t+x=0\}
\]
and introduce $s=t+x$.  Then $H_\infty=\{s=0\}$ and the special vertex is
$P=[1:0:0:0]$ in the coordinates $[s:x:y:z]$.  The hyperplanes through $P$
are
\[
 x=0,\quad y=0,\quad z=0,\quad
 x\pm y=0,\quad x\pm z=0,\quad y\pm z=0.
\]
The remaining hyperplanes are
\[
 2x-s=0,
\]
\[
 y-x+s=0,\qquad x+y-s=0,
\]
\[
 z-x+s=0,\qquad x+z-s=0.
\]
Each has on $s=0$ the same trace as one of the central hyperplanes listed
above.  Hence Lemma~\ref{lem:intrinsic-special-vertex} applies.

For $\cA^3_1(14)=\cA(B_4)\setminus\{t=0,x=0\}$ take
\[
 H_\infty=\{x+y=0\},\qquad P=[0:1:0:0].
\]
The hyperplanes through $P$ are
\[
 y=0,\ z=0,\ t\pm y=0,\ t\pm z=0,\ y\pm z=0.
\]
The remaining noncentral hyperplanes have the following partners through
$P$ with the same trace on $H_\infty$:
\[
\begin{array}{c|c@{\qquad}c|c}
H&H_P&H&H_P\\
\hline
t+x=0&t-y=0&t-x=0&t+y=0\\
x-y=0&y=0&x+z=0&y-z=0\\
x-z=0&y+z=0&&
\end{array}
\]
(the hyperplane $H_\infty$ itself is omitted from the table).  This
verifies the special-vertex condition.

Finally, for
$\cA^3_1(13)=\cA(B_4)\setminus\{t=0,x=0,y=0\}$ take
\[
 H_\infty=\{y+z=0\},\qquad P=[0:0:1:0].
\]
The hyperplanes through $P$ are
\[
 z=0,\ t\pm x=0,\ t\pm z=0,\ x\pm z=0.
\]
The remaining noncentral hyperplanes are paired as follows:
\[
\begin{array}{c|c@{\qquad}c|c}
H&H_P&H&H_P\\
\hline
t+y=0&t-z=0&t-y=0&t+z=0\\
x+y=0&x-z=0&x-y=0&x+z=0\\
y-z=0&z=0&&
\end{array}
\]
so Lemma~\ref{lem:intrinsic-special-vertex} applies again.  This proves the
claim for all three proper subarrangements.
\end{proof}
\begin{remark}
The examples above show, in particular, that the special-vertex condition is not a
reformulation of reducibility: it occurs for genuinely irreducible reflection arrangements and for
irreducible simplicial subarrangements of them.
\end{remark}
\section{Irreducible simplicial arrangements without a special vertex}
\label{sec:negative}
For the non-existence results it is convenient to pass to the dual configuration. Let
\[
 \cN(\cA)=\{[a_H]\mid H\in\cA\}\subset\PP(V^*)
\]
be the projective configuration of normal covectors.  For
$P\in\PP(V)$, let
\[
 \Pi_P=\{[a]\in\PP(V^*)\mid a(P)=0\}.
\]
Thus $\Pi_P$ is the projective plane dual to $P$.

\begin{lemma}
\label{lem:dual-special-vertex}
Let $\cA$ be an essential arrangement in $\PP(V)$ and put
$\cN=\cN(\cA)$.  Then $\cA$ is a special-vertex arrangement if and only if
there exist a point $q_\infty\in\cN$ and a projective plane
$\Pi\subset\PP(V^*)$, with $q_\infty\notin\Pi$, such that
\[
 \langle q_\infty,q\rangle\cap(\cN\cap\Pi)\neq\varnothing
 \qquad
 \text{for every }q\in\cN\setminus\{q_\infty\}.
\]
Moreover, if such a pair exists, then $\cN\cap\Pi$ spans $\Pi$.
\end{lemma}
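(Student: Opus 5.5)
The plan is to translate Definition~\ref{def:special} verbatim into the dual space $\PP(V^*)$. Two facts do all the work. First, a hyperplane $H$ contains $P$ exactly when $a_H(P)=0$, that is, when $[a_H]\in\Pi_P$. Second, for distinct $H,H'\in\cA$ the line $H\cap H'$ corresponds to the projective line $\langle [a_H],[a_{H'}]\rangle$ in $\PP(V^*)$: a hyperplane $H''$ contains $H\cap H'$ if and only if $a_{H''}$ lies in the span of $a_H$ and $a_{H'}$.

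The forward direction runs as follows. Given a special-vertex pair $(H_\infty,P)$, I will set $q_\infty=[a_{H_\infty}]$ and $\Pi=\Pi_P$. Since $P\notin H_\infty$, we get $a_{H_\infty}(P)\neq0$, so $q_\infty\notin\Pi$. Now take $q=[a_H]$ with $H\neq H_\infty$. The definition supplies $H_P\ni P$ with $H_P\cap H_\infty=H\cap H_\infty$, and $H_P\neq H_\infty$ because $P\notin H_\infty$. By the second fact, $a_{H_P}$ lies in the span of $a_H$ and $a_{H_\infty}$. Hence $[a_{H_P}]\in\langle q_\infty,q\rangle\cap(\cN\cap\Pi)$.

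The converse is where a little care is needed. Given $(q_\infty,\Pi)$, I will take $H_\infty$ to be the plane with $[a_{H_\infty}]=q_\infty$ and $P$ to be the point dual to $\Pi$, so that $\Pi=\Pi_P$; this point exists because $\Pi$ is a projective plane in the $3$-dimensional space $\PP(V^*)$. Then $P\notin H_\infty$. For $H\neq H_\infty$, pick $[a_{H'}]\in\langle q_\infty,[a_H]\rangle\cap\cN\cap\Pi$. Since $q_\infty\notin\Pi$, we have $H'\neq H_\infty$. Also $a_{H'}$ is a combination $\lambda a_{H_\infty}+\mu a_H$ with $\mu\neq0$, so restricting to $H_\infty$ gives $H'\cap H_\infty=H\cap H_\infty$. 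Moreover $P\in H'$. So $H_P:=H'$ works. This identification of $H\cap H_\infty$ with a line through $q_\infty$ is the step I would write out most carefully.

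For the spanning claim, note that every $q\in\cN\setminus\{q_\infty\}$ lies on a line joining $q_\infty$ to a point of $\cN\cap\Pi$. Hence $\cN$ is contained in the union of the lines joining $q_\infty$ to $\cN\cap\Pi$, and so in the span of $q_\infty$ together with $\cN\cap\Pi$. By essentiality, $\cN$ spans $\PP(V^*)$, a space of projective dimension $3$. If $\cN\cap\Pi$ spanned only a proper subspace of $\Pi$, of dimension at most $1$, then adding $q_\infty$ would give a span of dimension at most $2$, a contradiction. So $\cN\cap\Pi$ spans $\Pi$.

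I expect no serious obstacle. The only subtle points are the excluded degenerate cases: $H_P=H_\infty$ must be ruled out, and the coefficient $\mu$ must be nonzero. Both follow from $q_\infty\notin\Pi$.
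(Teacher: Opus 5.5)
Your proof is correct and follows the paper's argument essentially step for step. It uses the same dualization $q_\infty=[a_{H_\infty}]$, $\Pi=\Pi_P$, the same identification of equal traces on $H_\infty$ with collinearity of $q_\infty$, $[a_H]$, $[a_{H_P}]$ in $\PP(V^*)$, and the same essentiality argument for the spanning claim; your explicit handling of the degenerate cases ($H_P\neq H_\infty$ and $\mu\neq0$, both coming from $q_\infty\notin\Pi$) is slightly more careful than the paper's appeal to ``the same linear-dependence argument''.
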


\begin{proof}
Assume first that $(P,H_\infty)$ is a special-vertex pair.  Let
$q_\infty=[a_\infty]$ be dual to $H_\infty$ and put $\Pi=\Pi_P$.  A point
$[b]$ belongs to $\Pi$ exactly when the corresponding hyperplane contains
$P$.

Let $H=\ker(a)$ and $H_P=\ker(b)$ be members of the arrangement.  Their
traces on $H_\infty=\ker(a_\infty)$ agree if and only if the restrictions
of $a$ and $b$ to $\ker(a_\infty)$ are proportional.  This is equivalent
to linear dependence of $a_\infty,a,b$, hence to collinearity of
$q_\infty,[a],[b]$ in $\PP(V^*)$.  The intrinsic criterion therefore gives
the stated incidence condition.

Conversely, suppose that $q_\infty$ and $\Pi$ satisfy the incidence
condition.  Let $H_\infty$ be dual to $q_\infty$ and let $P$ be dual to
$\Pi$.  Since $q_\infty\notin\Pi$, one has $P\notin H_\infty$.  For every
$H\neq H_\infty$, choose a point of $\cN\cap\Pi$ on the line joining its
dual point to $q_\infty$.  The corresponding hyperplane $H_P$ contains
$P$, and the same linear-dependence argument gives
$H\cap H_\infty=H_P\cap H_\infty$.  The intrinsic criterion applies.

Finally, if $\cN\cap\Pi$ were contained in a projective line $L\subset\Pi$,
then every point of $\cN$ would belong to the projective plane
$\langle q_\infty,L\rangle$.  This would contradict essentiality.  Hence
$\cN\cap\Pi$ spans $\Pi$.
\end{proof}
For $q_\infty\notin\Pi$, define the \emph{covered set}
\[
 C(q_\infty,\Pi)=
 \left\{q\in\cN\setminus\{q_\infty\}\;\middle|\;
 \langle q_\infty,q\rangle\cap(\cN\cap\Pi)\neq\varnothing\right\}.
\]
Then the special-vertex condition is equivalent to
\[
 \card{C(q_\infty,\Pi)}=\card{\cN}-1.
\]

\begin{corollary}
\label{cor:finite-test}
Let $\cA$ be essential and let $\cN=\cN(\cA)$.  To decide whether $\cA$
is special-vertex, it is enough to enumerate the projective planes spanned
by triples of noncollinear points of $\cN$.
\end{corollary}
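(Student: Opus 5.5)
The statement to prove is Corollary~\ref{cor:finite-test}: to decide whether $\cA$ is special-vertex, it suffices to test planes of $\PP(V^*)$ spanned by noncollinear triples of points of $\cN$. I would deduce this directly from Lemma~\ref{lem:dual-special-vertex}. That lemma says $\cA$ is special-vertex exactly when there is a pair $(q_\infty,\Pi)$ with $q_\infty\in\cN$, $q_\infty\notin\Pi$, and $\card{C(q_\infty,\Pi)}=\card{\cN}-1$. A priori $\Pi$ ranges over the infinite family of all projective planes in $\PP(V^*)$. So the only task is to show that any witnessing plane already belongs to the finite list of planes spanned by triples of noncollinear points of $\cN$.

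The key input is the ``moreover'' clause of Lemma~\ref{lem:dual-special-vertex}. If $(q_\infty,\Pi)$ is a witnessing pair, then $\cN\cap\Pi$ spans $\Pi$. Since $\Pi$ is a projective plane, a spanning set contains three noncollinear points $q_1,q_2,q_3\in\cN\cap\Pi$, and then $\Pi=\langle q_1,q_2,q_3\rangle$. Hence every witnessing plane is spanned by some noncollinear triple of $\cN$. There are at most $\binom{\card{\cN}}{3}$ such planes, and $\card{\cN}=n$ because distinct hyperplanes have non-proportional normals.

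The decision procedure is then as follows. Enumerate all noncollinear triples, form the planes $\Pi$ they span, and remove duplicates. For each such $\Pi$ and each $q_\infty\in\cN\setminus\Pi$, compute $C(q_\infty,\Pi)$ by testing, for each $q\neq q_\infty$, whether some $r\in\cN\cap\Pi$ satisfies $\operatorname{rank}(q_\infty,q,r)\le 2$. This is an exact linear-algebra check on representative covectors, for instance over $\mathbb Q$ or $\mathbb Q(\sqrt{2})$ for root systems. Soundness comes from the converse direction of Lemma~\ref{lem:dual-special-vertex}: any pair found this way yields a special-vertex pair. Completeness comes from the spanning argument above.

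There is no serious obstacle. The only points to state carefully are that essentiality is what drives the spanning property, which is already proved in the lemma, and that for $q\in\Pi\cap\cN$ itself the condition holds trivially with $r=q$. The second point does not affect the reduction but explains why only points of $\cN$ off $\Pi$ carry the real content of the test.
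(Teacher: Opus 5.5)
Your proposal is correct and follows the paper's own proof. The final assertion of Lemma~\ref{lem:dual-special-vertex} shows that $\cN\cap\Pi$ spans any witnessing plane $\Pi$, so $\Pi$ is spanned by a noncollinear triple of $\cN$ and appears in the finite enumeration; your added decision procedure (soundness from the converse direction of the lemma, exact rank tests) matches the computational setup the paper describes after the corollary.
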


\begin{proof}
By the final assertion of Lemma \ref{lem:dual-special-vertex}, a plane occurring
in a special-vertex pair is spanned by $\cN\cap\Pi$.  It therefore contains
three noncollinear points of $\cN$ and appears in the stated enumeration.
\end{proof}

\begin{corollary}\label{cor:special-combinatorial}
For essential arrangements in $\PP^3(\RR)$, the special-vertex property
is determined by the intersection lattice.
\end{corollary}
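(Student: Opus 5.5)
The claim is that whether $\cA$ is a special-vertex arrangement depends only on the intersection lattice $L(\widehat{\cA})$. The plan is to reduce the dual criterion of Lemma~\ref{lem:dual-special-vertex} to a statement about linear dependencies among the normal covectors $a_H$. Those dependencies are encoded by the matroid of $\cA$, and the matroid is equivalent to the geometric lattice $L(\widehat{\cA})$. Concretely, for a subset $S\subseteq\cA$, the rank of $\{a_H\mid H\in S\}$ equals the codimension of $\bigcap_{H\in S}H$ in $V$, and this codimension is read off from the rank function of the lattice.

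First I would fix the candidate data. By Lemma~\ref{lem:dual-special-vertex} and Corollary~\ref{cor:finite-test}, a special-vertex pair corresponds to a point $q_\infty\in\cN$ and a plane $\Pi$ spanned by points of $\cN$ with $q_\infty\notin\Pi$. So $\Pi$ is determined by the subset $F=\cN\cap\Pi$, and this subset is a rank-three flat of the matroid of $\cA$. Dually, $F$ is the set of hyperplanes through a rank-three flat $x\in L_3(\cA)$, that is, a vertex $P$. The condition $q_\infty\notin\Pi$ says that $H_\infty\notin\cA_x$, where $\cA_x$ denotes the set of hyperplanes through $x$. In lattice terms this is $x\not\le H_\infty$.

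Second, I would translate the covering condition. For $q=[a_H]$ and $[b]\in F$, collinearity of $q_\infty,q,[b]$ means that $\{a_{H_\infty},a_H,b\}$ has rank at most two. When $H\neq H_P$ this is equivalent to $H_\infty\cap H=H_\infty\cap H_P$ in $L(\widehat{\cA})$, i.e. the join $H_\infty\vee H$ coincides with $H_\infty\vee H_P$. When $H$ itself lies in $F$, the condition holds trivially by taking $H_P=H$. Hence the special-vertex property becomes the purely lattice-theoretic statement below.
\begin{quote}
There exist an atom $H_\infty$ and a rank-three element $x$ with $x\not\le H_\infty$ such that every atom $H\neq H_\infty$ satisfies $H_\infty\vee H=H_\infty\vee H'$ for some atom $H'\le x$.
\end{quote}

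The step requiring care is the converse direction: such lattice data must actually come from a geometric pair $(P,H_\infty)$, and the nondegeneracy $P\notin H_\infty$ must correspond exactly. Taking $P$ to be the point $x$ handles this, since $x\not\le H_\infty$ means precisely $P\notin H_\infty$. Then Definition~\ref{def:special} holds directly, with $H_P=H'$. Finally, a lattice isomorphism preserves atoms, ranks and joins, so the property transfers between arrangements with isomorphic intersection lattices.
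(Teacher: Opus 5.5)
Your argument is correct and is essentially the paper's: both use Lemma~\ref{lem:dual-special-vertex} to force $\cN\cap\Pi$ to be a rank-three flat not containing $q_\infty$, and then read the covering condition as a closure condition in the matroid. You only rewrite this with atoms and joins in $L(\widehat{\cA})$, and you make explicit both the converse and the transfer along lattice isomorphisms.

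One notational slip needs fixing. With reverse inclusion, which your condition ``$H'\le x$'' presupposes, the requirement $P\notin H_\infty$ must be written $H_\infty\not\le x$. As written, $x\not\le H_\infty$ holds automatically for a rank-three $x$ and an atom $H_\infty$, so your displayed lattice condition loses the requirement $P\notin H_\infty$.
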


\begin{proof}
By Lemma~\ref{lem:dual-special-vertex}, if a special-vertex pair exists,
then $\cN\cap\Pi$ spans $\Pi$. Hence
\[
F=\cN\cap\Pi
\]
is a rank-three flat of the matroid represented by $\cN$. The
special-vertex condition is therefore equivalent to the existence of an
element $q_\infty\notin F$ and a rank-three flat $F$ such that
\[
\operatorname{cl}\{q_\infty,q\}\cap F\neq\varnothing
\qquad
\text{for every }q\in\cN\setminus\{q_\infty\}.
\]
This condition depends only on the matroid of the normal configuration,
or equivalently on the intersection lattice of $\cA$.
\end{proof}

We apply the finite test to the projectivized root configurations of types
$D_4$ and $F_4$.  All computations use primitive integer vectors and exact
determinants; in particular, no numerical tolerance or floating-point
recognition enters the argument.

For reproducibility, the enumeration used below is the following.  For a
fixed representative $q_\infty$, we enumerate all unordered triples of
noncollinear points of $\cN(\cA)$, compute the projective plane that they
span, normalize its defining equation to a primitive integer vector, and
deduplicate the resulting planes.  Planes containing $q_\infty$ are
discarded.  For every remaining plane $\Pi$ we compute
$\cN(\cA)\cap\Pi$ exactly, and a point $q\neq q_\infty$ is declared covered
precisely when there is an $r\in\cN(\cA)\cap\Pi$ for which the three normal
vectors representing $q_\infty,q,r$ have rank at most two.  When useful, we group the candidate planes into orbits under the stabilizer of $q_\infty$ in the relevant Weyl group; the numerical entries
below always count the actual candidate planes.  Thus every number in the
tables is obtained from integer rank and determinant computations.

\subsection{The arrangement of type \texorpdfstring{$D_4$}{D4}}

Take the projective normal configuration
\[
 \cN_{D_4}
 =\{[e_i+e_j],[e_i-e_j]\mid 1\leq i<j\leq4\}.
\]
It consists of twelve points.  The Weyl group is transitive on these points,
so it is enough to fix
\[
 q_\infty=[e_1+e_2].
\]
The planes spanned by points of $\cN_{D_4}$ and not containing
$q_\infty$ form five orbits under the stabilizer of $q_\infty$ in
$W(D_4)$, of sizes
\[
1,\qquad 8,\qquad 2,\qquad 2,\qquad 2.
\]
The last three orbits have the same values of
$\card{\cN_{D_4}\cap\Pi}$ and $\card{C(q_\infty,\Pi)}$, so we aggregate
them into a single incidence-data profile. Writing
$[a_1:a_2:a_3:a_4]$ for coordinates in $\PP(V^*)$, the resulting exact
incidence data are as follows.

\begin{center}
\begin{tabular}{c@{\qquad}c@{\qquad}c@{\qquad}c}
\toprule
$\text{representative }\Pi$&\text{number}&
$\card{\cN_{D_4}\cap\Pi}$&$\card{C(q_\infty,\Pi)}$\\
\midrule
$a_1+a_2=0$&1&3&3\\
$a_2-a_4=0$&8&3&6\\
$a_1+a_2-a_3-a_4=0$&6&6&10\\
\bottomrule
\end{tabular}
\end{center}

There are eleven points distinct from $q_\infty$, but the largest covered
set has size ten.  In the last and closest case, one may take the uncovered
point to be $[e_3+e_4]$.  Hence the dual criterion fails.

\begin{proposition}\label{prop:D4}
The Coxeter arrangement $\cA(D_4)=\cA^3_1(12)$ is an irreducible simplicial
arrangement which is not special-vertex.
\end{proposition}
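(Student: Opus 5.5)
Irreducibility and simpliciality of $\cA(D_4)$ are standard: it is the reflection arrangement of an irreducible finite Coxeter group. Essentiality holds because the roots $e_i\pm e_j$ span $V^*$, and every chamber is a simplicial cone, so every projective chamber is a tetrahedron. As a consistency check one could verify Theorem~\ref{thm:simpliciality-RP3} from the $D_4$ flat data, but it is not needed. The real content is the negative statement, and I would derive it from Lemma~\ref{lem:dual-special-vertex} together with Corollary~\ref{cor:finite-test}.

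First, reduce the choice of $q_\infty$. The group $W(D_4)$ acts by linear automorphisms of $V^*$ that preserve $\cN_{D_4}$, and it acts transitively on the twelve projectivized roots. If some pair $(q_\infty,\Pi)$ satisfies the covering condition, then applying $w\in W(D_4)$ gives the pair $(wq_\infty,w\Pi)$, which also satisfies it. So it suffices to rule out all pairs with $q_\infty=[e_1+e_2]$. By Corollary~\ref{cor:finite-test} (via the spanning assertion of Lemma~\ref{lem:dual-special-vertex}), $\Pi$ can be taken among the planes spanned by noncollinear triples of $\cN_{D_4}$ that do not contain $q_\infty$.

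Next, classify these planes up to the stabilizer of $q_\infty$, using the orbit representatives in the table. For each representative I would compute $\cN\cap\Pi$ and the covered set directly.

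\begin{itemize}
\item For $a_1+a_2=0$, the roots on $\Pi$ are $e_1-e_2$, $e_3-e_4$ and $e_3+e_4$. Only three other points lie on lines through $q_\infty$ meeting these, so the plane fails.
\item For $a_2-a_4=0$, a three-point intersection gives at most six covered points.
\item For $a_1+a_2-a_3-a_4=0$, the six points on $\Pi$ are $e_1-e_2$, $e_3-e_4$, $e_1+e_3$, $e_1+e_4$, $e_2+e_3$ and $e_2+e_4$. The candidate uncovered point is $e_3+e_4$.
\end{itemize}

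To verify that $e_3+e_4$ is uncovered, observe that the line joining $[e_1+e_2]$ and $[e_3+e_4]$ consists of the points $[\lambda(e_1+e_2)+\mu(e_3+e_4)]$. Such a point lies on $\Pi$ only if $\lambda=\mu$. The resulting vector $e_1+e_2+e_3+e_4$ has four nonzero coordinates, so it is not a root. Hence the line meets $\cN\cap\Pi$ in no point, and the covered set has size at most $10<11$.

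Finally, the orbit sizes must account for every candidate plane. The sizes $1+8+6=15$ have to match the total number of planes spanned by triples of $\cN_{D_4}$ that avoid $q_\infty$. This completeness of the orbit enumeration is the main obstacle. I would settle it by an exact integer enumeration of all noncollinear triples, following the procedure described above. A conceptual alternative is to list the rank-three flats of the $D_4$ root matroid (types $A_1^3$, $A_1\times A_2$, $A_3$) and check which of them contain $e_1+e_2$. Since every plane fails and the property is $W$-invariant, no special-vertex pair exists.
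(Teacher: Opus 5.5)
Your proposal is correct and follows the paper's own argument. You reduce to $q_\infty=[e_1+e_2]$ by transitivity of $W(D_4)$, restrict via Corollary~\ref{cor:finite-test} to the $15$ spanned planes avoiding $q_\infty$, and check coverage orbit by orbit, the best planes covering only $10$ of the $11$ required points.

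Two minor fixes. First, the row with representative $a_1+a_2-a_3-a_4=0$ lumps together three distinct stabilizer orbits of size two, so you should also check $a_1=0$ and $a_1+a_2+a_3-a_4=0$ (which leave $[e_1-e_2]$ and $[e_3-e_4]$ uncovered, respectively), or invoke a root-system automorphism outside $W(D_4)$ that fixes $q_\infty$. Second, $D_4$ has no rank-three flats of type $A_1\times A_2$: there are only $12$ of type $A_1^3$ and $12$ of type $A_3$, of which $9$ in total contain $q_\infty$.
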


\begin{proof}
Irreducibility and simpliciality are standard for the irreducible finite
Coxeter arrangement of type $D_4$.  The exhaustive plane enumeration above,
together with Corollary \ref{cor:finite-test}, excludes a special-vertex pair.
\end{proof}

\subsection{The arrangement of type \texorpdfstring{$F_4$}{F4}}

A projective root configuration for $F_4$ is
\[
 \begin{split}
 \cN_{F_4}={}&\{[e_i]\mid1\leq i\leq4\}
 \cup\{[e_i\pm e_j]\mid1\leq i<j\leq4\}\\
 &\cup
 \{[(\varepsilon_1,\varepsilon_2,\varepsilon_3,\varepsilon_4)]
 \mid \varepsilon_i\in\{\pm1\}\},
 \end{split}
\]
where opposite sign vectors represent the same projective point.  Thus
$\card{\cN_{F_4}}=24$.

There are two Weyl-group orbits of possible distinguished points: a
long-root orbit represented by $[e_1+e_2]$ and a short-root orbit represented
by $[e_1]$.  For either representative, the exact enumeration of spanned
planes not containing $q_\infty$ gives the same aggregate data:
\[
\begin{array}{c|c|c}
\toprule
\card{\cN_{F_4}\cap\Pi}&\text{number of candidate planes}&
\max\card{C(q_\infty,\Pi)}\\
\midrule
4&80&10\\
9&15&19\\
\bottomrule
\end{array}
\]
The special-vertex criterion would require coverage of all twenty-three
points distinct from $q_\infty$, whereas the exhaustive exact-arithmetic
enumeration above gives a maximum of nineteen.  Hence no candidate plane
from Corollary \ref{cor:finite-test} gives full coverage.

\begin{proposition}\label{prop:F4}
The Coxeter arrangement $\cA(F_4)=\cA^3_1(24)$ is an irreducible simplicial
arrangement which is not special-vertex.
\end{proposition}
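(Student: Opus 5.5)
The proof will mirror that of Proposition~\ref{prop:D4}. Irreducibility and simpliciality are standard for the irreducible finite Coxeter arrangement of type $F_4$. Simpliciality follows because chambers of a finite reflection arrangement are simplicial cones. One can also check it numerically against Theorem~\ref{thm:simpliciality-RP3} via $2f_3=1152$. So the real content is the non-existence of a special-vertex pair. By Lemma~\ref{lem:dual-special-vertex} this amounts to showing that no point $q_\infty\in\cN_{F_4}$ and plane $\Pi\not\ni q_\infty$ satisfy $\card{C(q_\infty,\Pi)}=23$.

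First, reduce the choice of $q_\infty$. The Weyl group $W(F_4)$ acts on $\PP(V^*)$ preserving $\cN_{F_4}$ and all collinearities, so $\card{C(q_\infty,\Pi)}$ is invariant under simultaneous action on $(q_\infty,\Pi)$. Since $W(F_4)$ has exactly two orbits on projective roots (long and short), it suffices to treat $q_\infty=[e_1+e_2]$ and $q_\infty=[e_1]$. One could also note that the outer automorphism of $F_4$ swaps long and short roots, so that the two orbits are equivalent at the level of the matroid; by Corollary~\ref{cor:special-combinatorial}, a single representative would then suffice. For safety, however, I will treat both.

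Second, by Corollary~\ref{cor:finite-test}, only planes spanned by three noncollinear points of $\cN_{F_4}$ need to be considered. Discard those containing $q_\infty$. For each remaining plane, compute $\cN_{F_4}\cap\Pi$ and the covered set exactly, using integer determinants. The tabulated data show that candidate planes contain either $4$ or $9$ points of $\cN_{F_4}$, with maximal coverage $10$ and $19$ respectively. Both fall short of $23$, so no pair exists.

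The main obstacle is making this enumeration convincing rather than merely computational. To give a conceptual check for the $9$-point planes, I would bound coverage directly. Each $r\in\cN\cap\Pi$ determines the line $\langle q_\infty,r\rangle$, and the points covered are those of $\cN$ on these lines other than $q_\infty$. Hence $\card{C}\le\sum_{r\in\cN\cap\Pi}(\card{\langle q_\infty,r\rangle\cap\cN}-1)$. In $F_4$, a line through two projective roots contains $2$, $3$ or $4$ points, according to whether the corresponding rank-two subsystem is $A_1^2$, $A_2$ or $B_2$. Counting how many of the $9$ points lie on $B_2$-lines through $q_\infty$ yields the bound $19$. For the $4$-point planes, the crude bound $4\cdot 3=12<23$ already suffices, since each line through $q_\infty$ meets $\cN$ in at most $4$ points. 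So I would present the $9$-point case as the critical one and handle it by this line-type count supplemented by the exact enumeration.
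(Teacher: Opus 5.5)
Your proposal is correct and is essentially the paper's proof. You reduce to the long- and short-root representatives of $q_\infty$, restrict to spanned planes via Corollary~\ref{cor:finite-test}, and use the exact coverage maxima $10$ and $19$, both below $23$.

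Your supplementary line-type count needs one more ingredient to be rigorous. Counting $B_2$-lines alone only gives $\card{C(q_\infty,\Pi)}\le 3\cdot 3+6\cdot 2=21$, not $19$. The sharp bound comes from noting that, since $q_\infty\notin\Pi$, the points of $\cN\cap\Pi$ lie on pairwise distinct lines among the thirteen lines of $\cN$ through $q_\infty$: three of type $B_2$, four of type $A_2$ and six of type $A_1^2$. A $9$-point plane therefore misses four of these lines, so $\card{C(q_\infty,\Pi)}\le 23-4=19$. In fact, full coverage would force $\card{\cN\cap\Pi}\ge 13$, and no plane contains more than $9$ points of $\cN$.
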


\begin{proof}
The two root-length orbits exhaust all choices of $q_\infty$.  For each
orbit, every candidate plane is included by Corollary \ref{cor:finite-test}, and the
coverage data above never attain the required value $23$.  Hence no
special-vertex pair exists.
\end{proof}

\begin{proof}[Proof of Theorem~\ref{thm:main-special-vertex}]
Combine Propositions~\ref{prop:D4} and~\ref{prop:F4} with Examples~\ref{ex:A4} and~\ref{ex:B4}, and Proposition~\ref{prop:B4-chain}.
\end{proof}

\begin{remark}\label{rem:proper}
The failure is not marginal in type $F_4$: for either Weyl-group orbit of
$q_\infty$, every candidate plane $\Pi$ occurring in Corollary
\ref{cor:finite-test} covers at most nineteen of the twenty-three points
distinct from $q_\infty$.  Hence at least four such points remain
uncovered.  Type $D_4$ is closer to the special-vertex condition, since an
optimal candidate covers ten of the required eleven points.
\end{remark}
We conclude this section with a structural question.

\begin{question}
Which irreducible simplicial arrangements in the Gr\"unbaum--Shephard--Geis
catalogue \cite{GrunbaumShephard, Geis} admit a special vertex ?
\end{question}

\section{Purdy-type and Gr\"unbaum--Shephard defects}
\label{sec:defects}
In this section we revisit numerical questions discussed in the final part of
\cite{GrunbaumShephard} and compare them with the special-vertex examples
studied above.  The Purdy-type inequality and the values for the full
Coxeter arrangements $A_4,D_4,B_4,F_4$ were already considered in
\cite{MMPP}.  Here we use those data as a reference point, add the three
proper members of the $B_4$ deletion chain, and compare the resulting
defects with the special-vertex property.  Thus the new contribution of
this section is the interaction with the simplicial deletion chain and the
special-vertex classification, rather than a repetition of the Coxeter
computations from \cite{MMPP}.  Throughout this section, let
\[
\cA=\{H_1,\ldots,H_n\}
\]
be an essential irreducible simplicial arrangement of projective planes
in $\PP^3(\RR)$. We write
\[
\ell(\cA)=|L_2(\cA)|
=
\sum_{q\geq2}t_q^{(1)}
\]
for the number of intersection lines and
\[
p(\cA)=|L_3(\cA)|
=
\sum_{p\geq3}t_p
\]
for the number of intersection points.

We consider two different numerical problems. The first compares the
numbers of rank-two and rank-three flats and leads to a Purdy-type
inequality. The second concerns the distribution of the multiplicities
of the rank-two flats and is related to the Gr\"unbaum--Shephard
conjecture. Although both problems involve intersection lines, they
measure different features of the intersection lattice.

\subsection{Rank-two and rank-three flats}

We first consider the difference
\[
G(\cA)=p(\cA)-\ell(\cA).
\]
Thus the inequality
\[
p(\cA)\geq\ell(\cA)
\]
is equivalent to $G(\cA)\geq0$. This inequality already fails for
irreducible simplicial arrangements. Indeed, for the Coxeter arrangement
of type $A_4$ one has
\[
p\bigl(\cA(A_4)\bigr)=15,
\qquad
\ell\bigl(\cA(A_4)\bigr)=25,
\]
and consequently
\[
G\bigl(\cA(A_4)\bigr)=-10.
\]
The corresponding Purdy-type inequality is weaker. We define the
\emph{Purdy defect} by
\[
\Delta(\cA)
=
p(\cA)-\ell(\cA)+n+2.
\]
Hence
\[
\Delta(\cA)\geq0
\quad\Longleftrightarrow\quad
p(\cA)\geq\ell(\cA)-n-2.
\]

For simplicial arrangements, Theorem~\ref{thm:simpliciality-RP3}
allows us to express $p(\cA)$ directly in terms of the incidence data introduced in
Section~\ref{sec:simplicial-RP3}.

\begin{proposition}
\label{prop:number-of-vertices}
Let $\cA$ be an essential simplicial arrangement of $n$ projective
planes in $\PP^3(\RR)$. Then
\begin{equation}
\label{eq:number-of-vertices}
2p(\cA)
=
\sum_{p\geq3}p\,t_p
-
n
-
\sum_{q\geq3}(q-2)
\sum_{p\geq3}t_{pq}.
\end{equation}
\end{proposition}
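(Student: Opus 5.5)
The identity \eqref{eq:number-of-vertices} should follow from Theorem~\ref{thm:simpliciality-RP3} by rewriting the sum over vertex multiplicities; no new geometric input is needed. The plan is to start from the simpliciality relation \eqref{eq:simpliciality-GS-data},
\[
n+\sum_{p\geq3}(2-p)t_p+\sum_{q\geq3}(q-2)\sum_{p\geq3}t_{pq}=0,
\]
which holds because $\cA$ is assumed essential and simplicial. We then split the middle term as
\[
\sum_{p\geq3}(2-p)t_p=2\sum_{p\geq3}t_p-\sum_{p\geq3}p\,t_p .
\]

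Next we use the identification $p(\cA)=\card{L_3(\cA)}=\sum_{p\geq3}t_p$. This holds because every vertex (rank-three flat) lies on at least three planes, so each vertex is counted exactly once in some $t_p$ with $p\geq 3$. It is the same count as $f_0$ in Lemma~\ref{lem:face-numbers-incidence}.

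Substituting, the relation becomes
\[
n+2p(\cA)-\sum_{p\geq3}p\,t_p+\sum_{q\geq3}(q-2)\sum_{p\geq3}t_{pq}=0.
\]
Solving for $2p(\cA)$ gives exactly \eqref{eq:number-of-vertices}.

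There is essentially no obstacle. The only point to check is the bookkeeping: the sum $\sum_{p\geq3}t_p$ really counts all of $L_3(\cA)$, and the double sum is restricted to $q\geq3$ as in Theorem~\ref{thm:simpliciality-RP3}. The $q=2$ terms contribute zero there anyway, so it does not matter whether they are included.

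Alternatively, one could derive the identity directly from $2f_0-2f_1+f_2=0$ in Proposition~\ref{prop:face-simpliciality-RP3} together with Lemma~\ref{lem:face-numbers-incidence}. That route merely repeats the proof of Theorem~\ref{thm:simpliciality-RP3}, so we would just cite the theorem.
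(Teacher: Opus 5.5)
Your proposal is correct and is essentially the paper's own proof: substitute $\sum_{p\geq3}(2-p)t_p=2p(\cA)-\sum_{p\geq3}p\,t_p$ into the simpliciality relation of Theorem~\ref{thm:simpliciality-RP3} and rearrange. Your bookkeeping check, that every rank-three flat lies on at least three planes so that $p(\cA)=\sum_{p\geq3}t_p$, is the only justification the substitution needs.
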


\begin{proof}
By Theorem~\ref{thm:simpliciality-RP3},
\[
n
+
\sum_{p\geq3}(2-p)t_p
+
\sum_{q\geq3}(q-2)
\sum_{p\geq3}t_{pq}
=0.
\]
Since
\[
p(\cA)=\sum_{p\geq3}t_p,
\]
we have
\[
\sum_{p\geq3}(2-p)t_p
=
2p(\cA)-\sum_{p\geq3}p\,t_p.
\]
Substituting this into the simpliciality relation and rearranging gives
\eqref{eq:number-of-vertices}.
\end{proof}

Proposition~\ref{prop:number-of-vertices} makes the relation with the
rank-flat and Purdy defects more direct. Since
\[
\ell(\cA)=\sum_{q\geq2}t_q^{(1)},
\]
we obtain
\[
G(\cA)
=
\frac{1}{2}
\left(
\sum_{p\geq3}p\,t_p
-
n
-
\sum_{q\geq3}(q-2)\sum_{p\geq3}t_{pq}
\right)
-
\sum_{q\geq2}t_q^{(1)},
\]
and
\[
\Delta(\cA)
=
\frac{1}{2}
\left(
\sum_{p\geq3}p\,t_p
+
n+4
-
\sum_{q\geq3}(q-2)\sum_{p\geq3}t_{pq}
\right)
-
\sum_{q\geq2}t_q^{(1)}.
\]
In particular, the Purdy inequality $\Delta(\cA)\geq0$ is equivalent
to
\begin{equation}
\label{eq:purdy-GS-data}
\sum_{p\geq3}p\,t_p+n+4
\geq
2\sum_{q\geq2}t_q^{(1)}
+
\sum_{q\geq3}(q-2)\sum_{p\geq3}t_{pq}.
\end{equation}

This formulation also explains why neither the rank-flat inequality nor
the Purdy inequality is an immediate algebraic consequence of the
simpliciality identity alone.  The simpliciality criterion contains the
weighted incidence term
\[
\sum_{q\geq3}(q-2)\sum_{p\geq3}t_{pq},
\]
and therefore the ordinary intersection lines, corresponding to
$q=2$, do not occur in this term.  On the other hand, every ordinary
line contributes to
\[
\ell(\cA)=\sum_{q\geq2}t_q^{(1)}.
\]
Thus the simpliciality identity gives a weighted incidence relation, but
this observation alone does not determine the total number of rank-two
flats.  In particular, it does not rule out the possibility that
simpliciality, together with the remaining incidence relations, implies
the Purdy inequality.

For the arrangements considered in the preceding sections, the
rank-two and rank-three data are collected in
Table~\ref{tab:purdy-defects}.  The rows corresponding to the full Coxeter
arrangements $A_4,D_4,B_4,F_4$ reproduce the values recorded in
\cite{MMPP}; the three intermediate rows are the deletion-chain data used
here.

\begin{table}[ht]
\centering
\renewcommand{\arraystretch}{1.15}
\begin{tabular}{c|r|r|r|r|r|c}
\hline
$\cA$
&
$n$
&
$\ell(\cA)$
&
$p(\cA)$
&
$G(\cA)$
&
$\Delta(\cA)$
&
special vertex
\\
\hline
$\cA(A_4)=\cA^3_1(10)$
&10&25&15&$-10$&2&yes\\
$\cA(D_4)=\cA^3_1(12)$
&12&34&24&$-10$&4&no\\
$\cA^3_1(13)$
&13&40&28&$-12$&3&yes\\
$\cA^3_1(14)$
&14&46&32&$-14$&2&yes\\
$\cA^3_1(15)$
&15&52&36&$-16$&1&yes\\
$\cA(B_4)=\cA^3_1(16)$
&16&58&40&$-18$&0&yes\\
$\cA(F_4)=\cA^3_1(24)$
&24&122&120&$-2$&24&no\\
\hline
\end{tabular}
\caption{Rank-flat and Purdy defects for the arrangements considered
in this paper.}
\label{tab:purdy-defects}
\end{table}

All arrangements in Table~\ref{tab:purdy-defects} have negative
rank-flat difference. Thus none of them satisfies
\[
p(\cA)\geq\ell(\cA).
\]
The values range from
\[
G\bigl(\cA(F_4)\bigr)=-2
\]
to
\[
G\bigl(\cA(B_4)\bigr)=-18.
\]
In contrast, their Purdy defects are all non-negative. In particular,
the additional term $n+2$ changes the picture substantially: the
Coxeter arrangement of type $B_4$, which has the most negative
rank-flat difference among the arrangements in the table, is precisely
the boundary case
\[
\Delta\bigl(\cA(B_4)\bigr)=0.
\]
The simplicial deletion chain
\[
\cA^3_1(13)\subset
\cA^3_1(14)\subset
\cA^3_1(15)\subset
\cA^3_1(16)=\cA(B_4)
\]
exhibits a particularly simple behaviour. For
\[
13\leq n\leq16
\]
one has
\[
p\bigl(\cA^3_1(n)\bigr)=4n-24,
\qquad
\ell\bigl(\cA^3_1(n)\bigr)=6n-38,
\]
and hence
\[
G\bigl(\cA^3_1(n)\bigr)=14-2n,
\qquad
\Delta\bigl(\cA^3_1(n)\bigr)=16-n.
\]
Thus every added plane decreases the rank-flat difference by two and
the Purdy defect by one, with the full $B_4$ arrangement attaining
equality in the Purdy inequality.

The numerical evidence above leads to the following question.

\begin{question}
\label{question:purdy-simplicial}
Does every irreducible simplicial arrangement of projective planes in
$\PP^3(\RR)$ satisfy
\[
p(\cA)-\ell(\cA)+n+2\geq0?
\]
\end{question}

It is also natural to ask about the equality case. Among the
arrangements considered here, the only example satisfying
\[
\Delta(\cA)=0
\]
is the Coxeter arrangement of type $B_4$. Is it the only irreducible simplicial arrangement with $\Delta(\cA)=0$?

\begin{remark}
Micha{\l}ek and the second author constructed in \cite{MMPP} a
hyperplane arrangement
\[
\cA\subset\PP^3(\CC)
\]
for which
\[
\Delta(\cA)<0.
\]
Thus the corresponding inequality does not hold for complex
hyperplane arrangements.
\end{remark}

\subsection{The Gr\"unbaum--Shephard defect}
\label{subsec:GS-defect}

The preceding discussion compares the total numbers of rank-two and
rank-three flats. A different question concerns only the multiplicity
distribution of the rank-two flats.

Recall that $t_q^{(1)}$ denotes the number of intersection lines contained
in exactly $q$ planes. We define the \emph{Gr\"unbaum--Shephard defect}
by
\[
\operatorname{GS}(\cA)
=
t_2^{(1)}-\sum_{q\geq3}t_q^{(1)}.
\]
The Gr\"unbaum--Shephard Conjecture from \cite{GrunbaumShephard} predicts that
\[
\operatorname{GS}(\cA)>0
\]
for every irreducible simplicial arrangement in $\PP^3(\RR)$. Equivalently,
more than one half of the intersection lines should be ordinary, since
\[
\ell(\cA)
=
t_2^{(1)}+\sum_{q\geq3}t_q^{(1)}.
\]
The pair-counting identity
\[
\sum_{q\geq2}\binom{q}{2}t_q^{(1)}
=
\binom{n}{2}
\]
from \eqref{eq:GS-pair-count} gives
\[
t_2^{(1)}
=
\binom{n}{2}
-
\sum_{q\geq3}\binom{q}{2}t_q^{(1)},
\]
and consequently
\begin{equation}
\label{eq:GS-defect}
\operatorname{GS}(\cA)
=
\binom{n}{2}
-
\sum_{q\geq3}
\left(
\binom{q}{2}+1
\right)t_q^{(1)}.
\end{equation}
Hence the Gr\"unbaum--Shephard conjecture is equivalent to the inequality
\[
\binom{n}{2}
>
\sum_{q\geq3}
\left(
\binom{q}{2}+1
\right)t_q^{(1)}.
\]
This formulation provides a useful interpretation of the conjecture.
It says that the contribution of the intersection lines of multiplicity at
least three, when weighted by
\[
\binom{q}{2}+1,
\]
must remain strictly smaller than the total number $\binom{n}{2}$ of
pairs of planes. In particular, intersection lines of high multiplicity
are increasingly restrictive: a $q$-fold line contributes with a weight
which grows quadratically in $q$. Thus the conjecture can be viewed as
asserting that, in an irreducible simplicial arrangement, the
multiplicity distribution of the non-ordinary intersection lines cannot
be too concentrated in high multiplicities.

For completeness, the rank-two multiplicity distributions for the
arrangements considered in this paper are given in
Table~\ref{tab:GS-defects}.  In all seven cases no intersection line has
multiplicity greater than four.

\begin{table}[ht]
\centering
\renewcommand{\arraystretch}{1.12}
\begin{tabular}{c|r|r|r|r}
\toprule
$\cA$ & $t_2^{(1)}$ & $t_3^{(1)}$ & $t_4^{(1)}$ &
$\operatorname{GS}(\cA)$\\
\midrule
$\cA(A_4)=\cA^3_1(10)$&15&10&0&5\\
$\cA(D_4)=\cA^3_1(12)$&18&16&0&2\\
$\cA^3_1(13)$&21&19&0&2\\
$\cA^3_1(14)$&25&20&1&4\\
$\cA^3_1(15)$&30&19&3&8\\
$\cA(B_4)=\cA^3_1(16)$&36&16&6&14\\
$\cA(F_4)=\cA^3_1(24)$&72&32&18&22\\
\bottomrule
\end{tabular}
\caption{Rank-two multiplicities and Gr\"unbaum--Shephard defects for the
arrangements considered in this paper.}
\label{tab:GS-defects}
\end{table}

Thus every arrangement treated here satisfies the strict Grünbaum--Shephard inequality. The conjecture remains open in general, and we are really interested in whether this inequality always holds for irreducible simplicial hyperplane arrangements in three-space.

\section*{Acknowledgments}
We would like to thank Michael Cuntz for introducing us to the notion of hyperplane arrangements with a special vertex. We also would like to thank Paul M\"ucksch for an interesting discussion about supersolvable simplicial arrangements.
\section*{Funding}
Marek Janasz and Piotr Pokora are supported by the National Science Centre (Poland) Sonata Bis Grant  \textbf{2023/50/E/ST1/00025.} For the purpose of Open Access, the authors have applied a CC-BY public copyright license to any Author Accepted Manuscript (AAM) version arising from this submission.

Marek Janasz, Piotr Pokora\\
\noindent
Department of Mathematics,
University of the National Education Commission Krakow,
Podchor\c a\.zych 2,
PL-30-084 Krak\'ow, Poland. \\
Email: \url{marek.janasz@uken.krakow.pl}, \url{piotr.pokora@uken.krakow.pl}.

\begin{thebibliography}{99}

\bibitem{Cuntz}
M. Cuntz, Simplicial arrangements with special vertex, \textbf{arXiv:2607.17785} (2026).

\bibitem{CuntzGeis}
M.~Cuntz and D.~Geis,
\newblock Combinatorial simpliciality of arrangements of hyperplanes,
\newblock \emph{Beitr. Algebra Geom.} \textbf{56(2)}: 439 -- 458 (2015).


\bibitem{DimcaBook}
A. Dimca,
\emph{Hyperplane arrangements: an introduction},
Universitext, Springer, Cham, 2017.

\bibitem{Geis}
D.~Geis, On simplicial arrangements in $\PP^3(\RR)$ with splitting
polynomial, \textbf{arXiv:1902.11185} (2021).

\bibitem{GrunbaumShephard}
B.~Gr\"unbaum and G.~C. Shephard, Simplicial arrangements in projective $3$-space. \emph{Mitt. Math. Sem. Giessen} \textbf{166}: 49 -- 101 (1984).

\bibitem{Hunt}
B. Hunt, Coverings and ball quotients with special emphasis on the 3-dimensional case. \textit{Bonn. Math. Schr.} \textbf{174}: 87 p. (1986).

\bibitem{Melchior}  
E. Melchior, \"{U}ber Vielseite der Projektive Ebene. Deutsche Mathematik {\bf 5}: 461 -- 475 (1941).

\bibitem{MMPP}
M. Micha{\l}ek and P. Pokora, A counterexample to Purdy's inequality for hyperplane arrangements in projective three-space, \textbf{arXiv:2607.08463} (2026).


\bibitem{OrlikTerao}
P.~Orlik and H.~Terao,
\newblock \emph{Arrangements of Hyperplanes},
\newblock Grundlehren der mathematischen Wissenschaften, vol.~300,
Springer-Verlag, Berlin, 1992.

\bibitem{Purdy1981}
G. B. Purdy, A proof of a consequence of Dirac’s conjecture. 
\textit{Geometriae Dedicata} \textbf{10}: 317 -- 321 (1981).

\bibitem{Purdy1986}
G. B. Purdy, Two results about points, lines and planes.
\textit{Discrete Mathematics} \textbf{60}: 215 -- 218 (1986).

\bibitem{KS1}
K. Saito, Theory of logarithmic differential forms and logarithmic vector fields, \textit{J. Fac. Sci., Univ. Tokyo, Sect. I A} \textbf{27}: 265 -- 291 (1980).

\bibitem{terao}
H. Terao, Generalized exponents of a free arrangement of hyperplanes and Shephard--Todd--Brieskorn formula, \textit{Invent. Math.} \textbf{63}: 159 -- 179 (1981).

\bibitem{Yoshinaga}
M. Yoshinaga, Freeness of hyperplane arrangements and related topics,
\textit{Ann. Fac. Sci. Toulouse, Math. (6)} \textbf{23(2)}:  483 -- 512 (2014).

\bibitem{Ziegler}
G.~M. Ziegler,
\newblock Multiarrangements of hyperplanes and their freeness,
\newblock in \emph{Singularities (Iowa City, IA, 1986)},
Contemp. Math. \textbf{90}, Amer. Math. Soc., Providence, RI, 1989,
345--359.

\bibitem{zaslavsky}
Th. Zaslavsky, Facing up to arrangements: Face-count formulas for partitions of space by hyperplanes, \textit{Mem. Am. Math. Soc.} \textbf{154}, 102 p. (1975).

\end{thebibliography}
\end{document}